\numberwithin{equation}{section}
\newtheorem{thm}{Theorem}[section]
\newtheorem{lem}[thm]{Lemma}
\newtheorem{prop}[thm]{Proposition}
\theoremstyle{definition}
\theoremstyle{remark}
\newtheorem{remark}[thm]{Remark}
\DeclareMathOperator{\sgn}{sgn}
\DeclareMathOperator{\CT}{CT}
\DeclareMathOperator{\AV}{AV}
\newcommand{\bC}{\mathbf{C}}
\newcommand{\bj}{\mathbf{j}}
\newcommand{\bN}{\mathbf{N}}
\newcommand{\ba}{\overline{\alpha}}
\newcommand{\bt}{\tilde{b}}
\newcommand{\gerg}{\mathfrak{g}}
\newcommand{\gerh}{\mathfrak{h}}
\newcommand{\reg}{\mathrm{reg}}
\newcommand{\Rad}{\mathrm{Rad}}
\newcommand{\ld}{\lambda}
\newcommand{\br}{\mathbf{r}}
\newcommand{\binf}{\mathbf{\infty}}
\newcommand{\bM}{\mathbf{M}}
\newcommand{\shat}[1]{{#1}^{'}}
\begin{document}

\title{The Bernstein-Sato $b$-function of the Space of Cyclic Pairs}


\author{Robin Walters}
\address{Department of Mathematics, University of Chicago}
\curraddr{5734 S. University Avenue, Chicago, IL 60637}
\email{robin@math.uchicago.edu}
\subjclass[2010]{Primary 14F10, Secondary 32S25}

\date{April 16, 2014}

\begin{abstract}
We compute the Bernstein-Sato polynomial of $f$, a function which given a pair $(M,v)$ in $X = M_n(\bC) \times \bC^n$ tests whether $v$ is a cyclic vector for $M$.  The proof includes a description of shift operators corresponding to the Calogero-Moser operator $L_k$ in the rational case. 
\end{abstract}

\maketitle

\section{Introduction}

Let $f$ be an algebraic function on a variety $X$ over $\bC$.  Let $D_X$ be the ring of algebraic differential operators on $X$.  The Bernstein-Sato $b$-function of $f$ is defined to be the minimal degree monic function $b(s)$ in $\bC[s]$ such that 
\begin{equation}\label{sato-bernstein}
D f^{s+1} = b(s) f^{s}
\end{equation}
for some operator $D$ in $D_X[s] = \bC[s] \otimes D_X $.  We call $D$ the \textit{Bernstein operator} and (\ref{sato-bernstein}) the \textit{Bernstein equation}.  A minimal $b(s)$ must exist since the set of all $b(s)$ satisfying (\ref{sato-bernstein}) form an ideal in $\bC[s]$.  Existence of non-zero solutions to (\ref{sato-bernstein}) was proved by Bernstein in 1971 \cite{Bern}.  The rationality of the roots of $b(s)$ was proved by Kashiwara in 1976 \cite{kashiwara1976b}.  The $b$-function is interesting, in part, because it is an invariant of the singularities of the divisor given by $f$. 

In \cite{Op5}, Opdam proves a conjecture of Yano and Sekiguchi \cite{YS} by computing the $b$-function corresponding to $I = \prod_{\alpha \in R^+}\alpha^2 $, a $W$-invariant function on $\gerh$.  In type $A_n$, the function $I$ is the square of the Vandermonde determinant.  Opdam proves the result by realizing the Bernstein operator as a shift operator related to the Calogero-Moser operator.

For this paper, let $X = M_n(\bC) \times \bC^n$.  For $(M,v) \in X$, we say $v$ is \textit{cyclic} for $M$ or that $(M,v)$ is a \textit{cyclic pair} if the set $\lbrace v, Mv, M^2v,\ldots \rbrace$ is a spanning set of $\bC^n$.  Let $C(M,v)$ denote the square matrix 
\[
[v\ Mv\ M^2v\ \ldots\ M^{n-1}v].
\]
We define $f(M,v) = \det (C(M,v))$, a polynomial on $X$.  Then $(M,v)$ is cyclic if and only if $f(M,v) \not = 0$.  

The function $f$ maps to $I$ via radial reduction.  Since radial reduction maps the standard Laplacian operator $\Delta$ to the Calogero-Moser operator on $\gerh$ \cite{GGS}, we can think of $X$ and $f$ as describing a broader, yet simpler, precursor situation to the one studied in \cite{Op5}. More generally, the function $f$ and the space $X$ are relevant to the study of mirabolic $D$-modules and rational Cherednik algebras \cite{mir3}\! - \! \nocite{mir1}\!\!\!\!\! \cite{mir2}.

The main result of the paper is the computation of the $b$-function of $f$. 

\begin{thm}\label{thm-b-func}
The $b$-function of $f$ is 
\begin{equation}
\bt(s) = \prod_{0 \leq c < d \leq n} ( s + 1 + c/d) . 
\end{equation}
\end{thm}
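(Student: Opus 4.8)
The plan is to establish the two divisibilities $\bt(s)\mid b(s)$ and $b(s)\mid\bt(s)$ separately. The structural fact behind both is that $f$ is a semi-invariant: since $C(gMg^{-1},gv)=g\,C(M,v)$, the conjugation--translation action of $\mathrm{GL}_n(\bC)$ on $X$ scales $f$ by $\det$; and on the locus where $M$ is regular semisimple, diagonalizing $M$ gives $C(M,v)=\mathrm{diag}(v_1,\dots,v_n)\,V(x)$ with $V$ the Vandermonde matrix, so $f=(\prod_i v_i)\,\delta(x)$, where $\delta$ is the type $A_{n-1}$ discriminant and $\delta^2=I$. This is the concrete form of the radial (Hamiltonian) reduction that sends $f^s$, up to a harmless twist, to $I^s$ and the Laplacian $\Delta$ on $X$ to the rational Calogero--Moser operator $L_k$ on $\gerh=\bC^n$, as in \cite{GGS}.

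For the upper bound $b(s)\mid\bt(s)$ I would build a Bernstein operator for $f$ out of shift operators for $L_k$, carrying Opdam's realization \cite{Op5} of the Bernstein operator for $I$ as a shift operator over to the space $X$. Grouping the roots by denominator, $\bt(s)=\prod_{d=1}^{n}\prod_{c=0}^{d-1}(s+1+c/d)$, the block $d=1$ is $(s+1)$, and I expect each block $\prod_{c=0}^{d-1}(s+1+c/d)$ to come from a single shift of the coupling parameter of $L_k$ by one unit, conjugated by an appropriate power of $\delta$ --- this is where the fractions $c/d$ enter, through the spectrum of $L_k$ on sections of the form $\delta^{2k}(\cdots)$. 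The new technical input, and the core of the construction, is an explicit description of the shift operators $G_\pm$ intertwining $L_k$ with $L_{k\pm1}$ in the rational case; composing these and transporting the result back up to $X$ through the reduction produces $D\in\sDx[s]$ with $Df^{s+1}=\bt(s)f^s$. One must check that the reduction introduces no extraneous factor in either direction, in particular that the relation $\delta^2=I$ contributes the arguments $s+1+c/d$ and not doubled arguments; this bookkeeping is the delicate part.

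For the lower bound $\bt(s)\mid b(s)$ I would induct on $n$, using $\bt_n(s)=\bt_{n-1}(s)\cdot\prod_{c=0}^{n-1}(s+1+c/n)$ together with the stratification of the non-cyclic divisor $\{f=0\}$ by the conjugacy type of $M$ and the position of $v$. Its generic point --- distinct eigenvalues, $v$ omitting exactly one eigenline --- is a smooth point of the divisor and forces only the root $-1$. Along the boundary stratum where $M$ preserves a proper $(n-1)$-dimensional subspace and already restricts non-cyclically to it, a Luna slice identifies $f$ transversally with the corresponding function in size $n-1$, so the inductive hypothesis yields $\bt_{n-1}(s)\mid b(s)$. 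Along the stratum where $M$ has a single eigenvalue of multiplicity $n$ (one Jordan block) and $v$ sits in the codimension-one invariant subspace, a transverse-slice computation --- equivalently, the fact that the roots of $b(s)$ reduce mod $\mathbf{Z}$ to the local monodromy eigenvalues $e^{2\pi i c/n}$ of $f$ along that stratum --- produces the remaining block $\prod_{c=0}^{n-1}(s+1+c/n)$, with the multiplicity of the repeated roots (notably $s+1$, of multiplicity $n$) accounted for by the $n$ distinct values of the denominator.

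The main obstacle is the upper bound: producing the rational Calogero--Moser shift operators in a form concrete enough for this computation, and then controlling the radial-reduction dictionary tightly enough to guarantee that the operator assembled on $\gerh$ lifts to one on all of $X$ whose $b$-polynomial is exactly $\bt(s)$, not a proper multiple --- so that the upper and lower bounds meet. A secondary difficulty is verifying that the transverse-slice analysis in the lower bound genuinely realizes each denominator $d$ (rather than merely a divisor of $d$).
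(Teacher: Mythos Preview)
Your high-level plan --- connect to rational Calogero--Moser shift operators for the upper bound, analyze the geometry of $\{f=0\}$ for the lower bound --- overlaps with the paper's in the first half but misses its central device, and takes a much harder route in the second.

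For the upper bound, the paper does \emph{not} lift shift operators from $\gerh$ to $X$; the radial-parts map $\Rad_k\colon D_X^G\to D_{\gerh^\reg}^W$ goes the wrong way for that, and producing a section is exactly the obstacle you yourself flag. Instead one writes down directly on $X$ the Sato-type operator $S=f(\partial)$, obtained by replacing every coordinate in $f$ by the corresponding partial. Semi-invariance together with a degree count forces $S f^{s+1}=\shat b(s)\,f^s$ with $\deg\shat b\le n(n+1)/2$. Only then does one push $S\hat f$ \emph{down} to $\gerh$ via $\Rad_k$, recognize it as an element of $\mathbb{S}_\gerh(-1,k+2)$, and use the structure of rational shift operators plus Opdam's constant-term formula for $G(-1,k+2)$ to conclude that $b_2(s)=\prod_{1\le c<d\le n}(d(s+1)+c)$ divides $\shat b$. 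No lifting is ever needed; going $X\to\gerh$ rather than $\gerh\to X$ is the whole point.

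For the lower bound and minimality, the paper is much lighter than your induction. A single localization at $(M,v)$ with $M$ regular diagonal and $v=0$ gives $f=(\prod_i v_i)\cdot(\text{unit})$ there, so the local $b$-function is $(s+1)^n$ and $b_1(s)=n!(s+1)^n$ divides the true $b$-function, hence $\shat b$. Since $\gcd(b_1,b_2)=1$ and $\deg(b_1b_2)=n(n+1)/2$ saturates the degree bound, $\shat b=\bt\,\alpha_n$. Minimality then follows in one stroke from an Igusa-type zeta-function argument: any operator in $\bC[s,\partial_{x_i}]$ (constant coefficients, no nonscalar $\bC[s]$-factor) satisfying a Bernstein equation realizes the actual $b$-function, and $S$ is such an operator by construction. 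Your Luna-slice induction could perhaps be pushed through, but controlling the exact multiplicity of each root along the strata --- in particular getting $(s+1)$ with multiplicity exactly $n$ --- is delicate, and the Igusa argument renders it unnecessary.
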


 Since $f$ is a semi-invariant, the calculation has some similarity to the prehomogeneous case originally considered by Sato in \cite{sato1} and \cite{sato2}. In that case, $G$ acts on a vector space with open dense orbit.  However, our space is not prehomogeneous, so some additional work is required.  

The proof will proceed in three parts.  First we will define a differential operator $S$ in $D_X[s]$, and show that one has an equation  
\begin{equation} \label{eqn-hatb}
S f^{s+1} = \shat{b}(s) f^s 
\end{equation}
for an unknown function $\shat{b}(s)$ in $\bC[s]$. 

Secondly, we show $\bt(s)$, our specific candidate function, is the monic associate of $\shat{b}(s)$.

Thirdly, we will show that $\bt(s)$ has, in fact, the \textit{minimal} degree, completing the proof.

Section \ref{sec-f} contains some results about $f$ and cyclic vectors.  Then we continue to the proof of our main theorem.  This is the content of Sections \ref{sec-proof1} through \ref{sec-proof4}.  In the Appendix (Section \ref{Section-proof-prop}), we will give a proof characterizing the structure of the space of shift operators of the rational Calogero-Moser operator, an analog of a similar result for the trigonometric case in \cite{Op5}.  

\section{Cyclic Vectors and Semi-Invariants}
\label{sec-f}

The following result is well-known.

\begin{prop}
The matrix $M$ has a cyclic vector if and only if each Jordan block $B_i$ has a distinct eigenvector $\lambda_i$, that is, if each eigenspace is one-dimensional.
\end{prop}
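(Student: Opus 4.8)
The plan is to reduce to the classical criterion that $M$ has a cyclic vector if and only if its minimal polynomial $\mu_M$ equals its characteristic polynomial $\chi_M$, and then to read off both sides of that equivalence from the Jordan form.

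First I would record that for any $v$, the cyclic subspace $\Span\{v, Mv, M^2v, \ldots\}$ equals $\bC[M]v$ and has dimension $\deg \mu_{M,v}$, where $\mu_{M,v}$ is the monic generator of the ideal $\{p \in \bC[x] : p(M)v = 0\}$; so $v$ is cyclic exactly when $\deg \mu_{M,v} = n$. Since $\mu_{M,v} \mid \mu_M \mid \chi_M$ and $\deg \chi_M = n$, a cyclic vector can exist only if $\mu_M = \chi_M$. For the converse I would exhibit a vector realizing $\mu_M$: using the primary decomposition $\bC^n = \bigoplus_\lambda V_\lambda$ into generalized eigenspaces, pick in each $V_\lambda$ the top vector $v_\lambda$ of a largest Jordan block at $\lambda$, whose annihilator is $(x-\lambda)^{e_\lambda}$ with $e_\lambda$ that block's size, and set $v = \sum_\lambda v_\lambda$; coprimality of the local factors gives $\mu_{M,v} = \prod_\lambda (x-\lambda)^{e_\lambda} = \mu_M$. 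Hence $M$ has a cyclic vector iff $\mu_M = \chi_M$.

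Next I would translate this into the Jordan picture. If the Jordan blocks of $M$ at $\lambda$ have sizes $m_{\lambda,1} \ge m_{\lambda,2} \ge \cdots$, then $\mu_M = \prod_\lambda (x-\lambda)^{m_{\lambda,1}}$ while $\chi_M = \prod_\lambda (x-\lambda)^{\sum_j m_{\lambda,j}}$, so $\mu_M = \chi_M$ holds iff each eigenvalue occurs in exactly one Jordan block. Finally, the number of Jordan blocks at $\lambda$ equals the geometric multiplicity $\dim\ker(M - \lambda I)$, i.e.\ the dimension of the $\lambda$-eigenspace; so ``one block per eigenvalue'' is precisely ``every eigenspace one-dimensional,'' which is the stated criterion.

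I expect the only genuine subtlety to be the converse direction --- actually producing a cyclic vector --- where coprimality of the local factors $(x-\lambda)^{e_\lambda}$ is what lets one glue the per-block top vectors into a single cyclic vector; the rest is bookkeeping with $\mu_M$ and $\chi_M$. (One could instead argue directly, avoiding minimal polynomials: a single Jordan block is cyclic, the space $\bC[M]v$ splits along the generalized eigenspaces by coprimality so $v$ is cyclic iff each component is, and a nilpotent $N$ on a $k$-dimensional space is cyclic iff $N^{k-1} \ne 0$ iff $\dim\ker N = 1$.)
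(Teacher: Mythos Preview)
Your argument is correct and is the standard route through the criterion $\mu_M = \chi_M$; there is nothing to fix. Note, however, that the paper does not actually prove this proposition: it introduces it with ``The following result is well-known'' and gives no argument, so there is no proof in the paper to compare yours against. Your write-up would serve perfectly well as the omitted proof.
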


We conjecture that $f$ is irreducible.  The above result allows us to prove something weaker but still sufficient for our purposes.  Let $\gerg = M_n(\bC)$.

\begin{prop} \label{prop-f-nofactor}
The function $f$ has no non-constant, proper factor $h$ where $h \in \bC[\gerg]$.  
\end{prop}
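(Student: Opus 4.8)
The plan is to argue by contradiction via a dimension count, using the preceding Proposition. Suppose $f = h\cdot g$ in $\bC[X]$ with $h \in \bC[\gerg]$ non-constant and $g \in \bC[X]$. Since $h$ involves only the variable $M$, the hypersurface $\{h = 0\} \subseteq \gerg$ pulls back to $\{h=0\}\times\bC^n \subseteq X$, and $f = hg$ forces $\{h=0\}\times\bC^n \subseteq \{f = 0\}$. By construction $(M,v) \in \{f=0\}$ says exactly that $v$ is not cyclic for $M$; hence for every $M$ with $h(M)=0$, no $v \in \bC^n$ is cyclic for $M$. Writing $\mathcal{N} = \{M \in \gerg : M \text{ has no cyclic vector}\}$, we conclude $\{h=0\} \subseteq \mathcal{N}$.

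Next I would bound $\dim \mathcal{N}$. The statement is trivial for $n = 1$ (there $f(M,v) = v$), so assume $n \geq 2$. By the preceding Proposition, $M \in \mathcal{N}$ iff some eigenspace of $M$ has dimension $\geq 2$, i.e.\ iff $\rank(M - \lambda I) \leq n-2$ for some $\lambda \in \bC$. Consider the incidence variety
\[
Z = \{ (M,\lambda) \in \gerg \times \bC : \rank(M - \lambda I) \leq n-2 \}.
\]
For each fixed $\lambda$, the fiber is a translate of the determinantal variety of $n\times n$ matrices of rank $\leq n-2$, which has dimension $(n-2)(n+2) = n^2 - 4$. Hence $\dim Z \leq n^2 - 3$, and since $\mathcal{N}$ is the image of $Z$ under projection to $\gerg$, we get $\dim \mathcal{N} \leq n^2 - 3$.

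To finish, note that $\bC$ is algebraically closed and $h$ is non-constant, so $\{h=0\}$ is a non-empty hypersurface in $\gerg \cong \bC^{n^2}$: every irreducible component has dimension exactly $n^2 - 1$. This contradicts $\{h=0\} \subseteq \mathcal{N}$ together with $\dim \mathcal{N} \leq n^2 - 3 < n^2 - 1$. Therefore no such $h$ exists.

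The main obstacle is obtaining a sufficiently strong bound on $\dim \mathcal{N}$; the decisive input is the structural Proposition on non-cyclic matrices, which converts the question into the standard (and sharp) dimension count for determinantal loci, and the extra $+1$ for the choice of eigenvalue $\lambda$ still leaves codimension $\geq 2$. The only points requiring care are the elementary observations that $h$, not involving $v$, pulls back to $\{h=0\}\times\bC^n$, and that $\{f=0\}\cap(\{M\}\times\bC^n) = \{M\}\times\bC^n$ precisely when $M \in \mathcal{N}$, both immediate from $f = \det C(M,v)$ and the definition of a cyclic vector.
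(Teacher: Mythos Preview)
Your proof is correct and follows the same overall strategy as the paper: both argue by contradiction, observing that a non-constant factor $h\in\bC[\gerg]$ would force the hypersurface $\{h=0\}$ to lie inside the locus $\mathcal{N}$ of matrices without a cyclic vector, and then show $\mathcal{N}$ has codimension at least $2$ in $\gerg$.

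The difference is in how the dimension bound on $\mathcal{N}$ is obtained. The paper works with the Jordan form: assuming two blocks share an eigenvalue, it exhibits a $GL(2)\times\bC^*$ in the stabilizer and deduces (after a somewhat sketchy orbit-dimension count, with an evident typo $n^4-4$ for $n^2-4$) that the locus has dimension $\leq n^2-4$. Your argument instead recasts $\mathcal{N}$ as the image of the incidence variety $Z=\{(M,\lambda):\rank(M-\lambda I)\leq n-2\}$ and invokes the standard dimension formula for determinantal loci to get $\dim Z\leq n^2-3$. Your route is cleaner and more self-contained: it avoids Jordan normal form bookkeeping and appeals only to a classical fact about rank $\leq n-2$ matrices. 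The paper's approach, in principle, yields the sharper bound $n^2-4$ (your incidence fibration loses one dimension to the redundant eigenvalue parameter), but that extra sharpness is irrelevant here since codimension $\geq 2$ already suffices.
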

\begin{proof}
Such a proper factor would correspond to a set of codimension 1 in $\gerg$ containing matrices $M$ with no cyclic vector.  By the assumption that at least two Jordan blocks share an eigenvalue, we have $k-1$ choices of $\lambda_i$.  Assuming $\lambda_1 = \lambda_2 = \lambda$ and reordering the basis so that the true eigenvectors of $B_1$ and $B_2$ are the first two vectors followed by all the generalized eigenvectors, we see that $B_1 \oplus B_2$ is 
\[
\left( 
\begin{array}{c|c} 
\lambda \mathrm{Id}_2 & A   \\ \hline
0 & B \end{array}\right).
\]
The submatrix $B$ is upper triangular with $\lambda$ on its diagonal, and the entries of $A$ and of $B$ above the diagonal are 1 or 0.  Then this matrix is stabilized by 
\[
\left( 
\begin{array}{c|c} 
 GL(2) & 0   \\ \hline
0 & \bC^* \mathrm{Id_{n-2}} \end{array}\right).
\]
Thus the dimension of this partition is less than or equal to
\[
n^2 + (k-1) - (k-2) - 1 - 4 = n^4 - 4 .
\]
So we are done.
\end{proof}

 Given a space $Y$ with a $G$-action, we denote the $G$-invariant operators on $Y$ as $D_Y^G$.  Denote $p^g(y) = p(g\cdot y)$.  The function $p$ is said to be a $G$-semi-invariant corresponding to character $\chi$ if 
\[
p^g = \chi(g) p \ \ \ \text{for all} \ \ g \in G . 
\]
We denote the semi-invariant functions corresponding to $\chi$ as $\bC[Y]^\chi$ and differential operators corresponding to $\chi$ as $D_Y^\chi$.  

In our case, we have $G = GL_n(\bC)$ acting on $X = \mathfrak{gl}_n (\bC) \times \bC^n$ via conjugation on the first factor and multiplication on the second factor. 
The one-dimensional representations (or characters) are just powers of the determinant $\chi = \det^r$.  

Note that $f \in \bC[X]^{\det}$.

The space of diagonal matrices in $\mathfrak{gl}_n (\bC)$ is isomorphic to $\bC^n$.  We denote it by $\gerh$. Let $Z = \gerh \times V \subset X.$ 

The following result is originally due to Weyl, although we give a different proof. 

\begin{prop} \label{prop-f-structure}
Let $p \in \bC(s)[X]^\chi$ where $\chi = \det^r$.   Then 
\[
p = f^r h,
\]
where $h \in \bC(s)[\gerg]^G$.
\end{prop}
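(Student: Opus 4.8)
The plan is to show that the rational function $q := p/f^{r}$ is actually a $G$-invariant polynomial depending only on the $\gerg$-coordinate; taking $h := q$ then gives $p = f^{r}h$, as wanted. This $q$ is well defined since $f \not\equiv 0$, and it is $G$-invariant because $p$ and $f^{r}$ are semi-invariants of the same character $\det^{r}$. (If $p = 0$ one takes $h = 0$; for $p \neq 0$ the argument below will also force $r \geq 0$.)

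First I would show that $q$ depends only on $M$, by restricting to the diagonal slice $Z = \gerh \times V$. On $Z$ the torus $T \subset G$ of diagonal matrices acts trivially on $\gerh$ and by $v_{i} \mapsto t_{i}v_{i}$ on $V$, while $\det^{r}$ restricts on $T$ to the weight $(t_{1}\cdots t_{n})^{r}$; comparing $T$-weights monomial by monomial in the $v_{i}$ forces
\[
p|_{Z} = (v_{1}\cdots v_{n})^{r}\, g(\lambda_{1},\ldots,\lambda_{n})
\]
for some $g \in \bC(s)[\lambda_{1},\ldots,\lambda_{n}]$ (and, if $p \neq 0$, $r \geq 0$, since otherwise $p|_{Z} = 0$ and then $p = 0$ because $G\cdot Z$ is dense in $X$). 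Since $M^{k}v$ has diagonal entries $\lambda_{i}^{k}v_{i}$ on $Z$, a direct computation gives $f|_{Z} = (v_{1}\cdots v_{n})\prod_{i<j}(\lambda_{j}-\lambda_{i}) \not\equiv 0$, so
\[
q|_{Z} = \frac{g(\lambda_{1},\ldots,\lambda_{n})}{\prod_{i<j}(\lambda_{j}-\lambda_{i})^{r}},
\]
a rational function of the $\lambda_{i}$ alone, and symmetric in them since $q$ is $G$-invariant and the permutation matrices lie in $G$ and stabilize $Z$. Hence it can be written as $R(\sigma_{1}(M),\ldots,\sigma_{n}(M))|_{Z}$ for a rational function $R$, where $\sigma_{i}(M)$ denotes the $i$-th coefficient of the characteristic polynomial of $M$. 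Now $R(\sigma_{1}(M),\ldots,\sigma_{n}(M))$ is a $G$-invariant rational function on all of $X$ that agrees with $q$ on $Z$; as $G\cdot Z$ (the diagonalizable locus) is dense in $X$, I conclude $q = R(\sigma_{1}(M),\ldots,\sigma_{n}(M)) \in \bC(s)(\gerg)^{G}$.

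Next I would promote this to regularity. Write $q = A(M)/B(M)$ with $A, B \in \bC(s)[\gerg]^{G}$ coprime (since $G$ is connected, $A$ and $B$ are semi-invariant, and a semi-invariant on $\gerg$ is invariant because the center $\bC^{*}\cdot\mathrm{Id}$ acts trivially there). Clearing denominators in $p = f^{r}A(M)/B(M)$ gives $p\,B(M) = f^{r}A(M)$ in $\bC(s)[X]$, so $B(M) \mid f^{r}$. But by Proposition \ref{prop-f-nofactor} no non-constant factor of $f$ lies in $\bC[\gerg]$ — and this persists over $\bC(s)$ since $f$ and its factors are defined over the algebraically closed field $\bC$ — so every irreducible factor of $f$, hence every non-constant factor of $f^{r}$, has positive degree in the $V$-coordinates, while $B(M)$ has degree $0$ in them. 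Thus $B$ is constant, $q = A(M)/B \in \bC(s)[\gerg]^{G}$, and $p = f^{r}q$.

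The hard part is the first step. Because $X$ is not prehomogeneous there is no dense $G$-orbit from which to read off the invariants, so one is forced onto the slice $Z$; the weight bookkeeping pinning $p|_{Z}$ down to $(v_{1}\cdots v_{n})^{r}g(\lambda)$ and the evaluation $f|_{Z} = (v_{1}\cdots v_{n})\prod_{i<j}(\lambda_{j}-\lambda_{i})$ are the essential ingredients, while Proposition \ref{prop-f-nofactor} is exactly what converts the rationality of $q$ into regularity.
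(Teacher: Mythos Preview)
Your proof is correct and follows essentially the same route as the paper's: form the $G$-invariant $q = p/f^{r}$, use the torus action on the slice $Z = \gerh \times V$ to see that $q|_{Z}$ is independent of $v$, extend by density of $G\cdot Z$ to get $q \in \bC(s)(\gerg)^{G}$, then write $q = A/B$ with $A,B \in \bC(s)[\gerg]^{G}$ coprime and invoke Proposition~\ref{prop-f-nofactor} to force $B$ constant. The only difference is cosmetic: the paper argues directly that the invariant $q|_{Z}$ has $v_i$-degree zero, whereas you compute $p|_{Z}$ and $f|_{Z}$ separately and take the ratio (and your detour through symmetric functions and the $\sigma_i(M)$ is harmless but unnecessary, since once $q|_{Z}$ is independent of $v$, density already gives $q \in \bC(s)(\gerg)^{G}$).
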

\begin{proof}
Consider $q = p f^{-r} \in \bC(s)(X)^G$. 

Now define the matrix 
\[
d = \mathrm{diag}(1,\ldots,1,\lambda,1\ldots,1) \in G
\]
containing $\lambda$ in its $i^\mathrm{th}$ entry.  This acts on $Z$ by fixing $\gerh$ and scaling $v^i$, the $i^{\mathrm{th}}$ coordinate of $V$.  However, since $q|_Z$ is invariant, it must be homogeneous of degree $0$ in $v^i$, that is, it is independent of $v^i$.  By invariance, $q$ is independent of $v^i$ on $G \cdot Z$, which is dense in $X$, and thus $q \in \bC(s)(\gerg)^G$.

It is not hard to see that  
\[
\bC(s)(\gerg)^G = \left\{ \frac{a}{b}\ \middle|\ a,b \in \bC(s)[\gerg]^G \right\}
\]
since a general element of $\bC(s)(\gerg)^G$ is a ratio of semi-invariants, but $\bC(s)[\gerg]$ has no non-invariant semi-invariants.

So we know 
\[
p = f^r \frac{a}{b}
\]
where $a,b \in \bC(s)[\gerg]^G$ and are, we can assume, relatively prime.  This equation implies that $b$ divides $f^r$.  However, by Proposition \ref{prop-f-nofactor}, $f^r$ has no factor in $\bC[\gerg]$ and thus none in $\bC(s)[\gerg]$ either.
\end{proof}


\section{The operator $S$}
\label{sec-proof1}

When we use the term \textit{order} in reference to a differential operator in $D_X[s]$, we refer to the traditional filtration in which $\partial_{x_i} = \frac{\partial}{\partial x_i}$ has degree 1 and functions in $\bC[s][X]$ have degree 0.  We will also refer to a $\mathbf{Z}$-grading on $D_X[s]$ in which $|\partial_{x_i}| = -1$ and $|x_i| = 1$.  The ring $\bC[s]$ lives in the grade 0.   Note that the grading is well-defined on $D_X[s]$ since it respects the defining relation $\partial_{x_i} x_i - x_i \partial_{x_i} = 1$.  When we refer to this grading, we will use term \textit{degree}. 

We define a differential operator $S$ which is given by taking the function $f$ and replacing all variables with the corresponding partial derivatives. That is,
\[
 S = \det([\partial_v\ \partial_M\partial_v\ \partial_M^2\partial_v\ \ldots\ \partial_M^{n-1}\partial_v])
\]
where $[\partial_v]_i = \partial_{v_i}$ and $[\partial_M]^i_{j}=\partial_{m^i_{j}}.$
Then $S$ is an order $\frac{n(n+1)}{2}$ differential operator and $S \in D_X^{\det^{-1}}$.  The definition of $S$ here is analogous to that of the Bernstein operator in Sato's prehomogeneous case.

We can now prove the following:  

\begin{prop}\label{prop-const}
There exists a function $\shat b \in \bC[s]$, such that $S f^{n+1} = \shat b(s) f^n$.  Further $\deg(\shat b(s)) \leq \frac{n(n+1)}{2}$.
\end{prop}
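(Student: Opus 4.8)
The strategy is to exploit the grading introduced just above the proposition. The operator $S$ has degree $-\frac{n(n+1)}{2}$ with respect to the $\mathbf{Z}$-grading (it is a product of partial derivatives, one for each column $\partial_M^j \partial_v$, contributing $-(j+1)$ for $j=0,\dots,n-1$), while $f$ is homogeneous of degree $\frac{n(n+1)}{2}$ in the same grading (it is a polynomial built from the columns $M^j v$, each of degree $j+1$). Hence $f^{n+1}$ has degree $(n+1)\cdot\frac{n(n+1)}{2}$, so $S f^{n+1}$ has degree $n\cdot\frac{n(n+1)}{2}$, which is exactly the degree of $f^n$. Thus $S f^{n+1}/f^n$, if it is a polynomial, automatically has degree $0$, i.e. is a constant with respect to the grading. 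The first task is to upgrade ``degree $0$'' to ``constant function on $X$''; the second is to identify that constant as a polynomial in $s$ of the stated degree.

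First I would show $S f^{n+1}$ is divisible by $f^n$ as a polynomial on $X$. The cleanest route is to combine the equivariance with Proposition~\ref{prop-f-structure}. Since $f \in \bC[X]^{\det}$, we have $f^{n+1} \in \bC[X]^{\det^{n+1}}$; and $S \in D_X^{\det^{-1}}$, so applying $S$ lowers the character by one, giving $S f^{n+1} \in \bC[X]^{\det^{n}}$. By Proposition~\ref{prop-f-structure} (with $r = n$, and working over $\bC$ rather than $\bC(s)$, which is the same argument), any such semi-invariant is $f^n h$ with $h \in \bC[\gerg]^G$. So indeed $S f^{n+1} = f^n h$ for some $G$-invariant polynomial $h$ on $\gerg = M_n(\bC)$.

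It remains to see that $h$ is a constant and to bound its degree. For the degree bound I use the ordinary order filtration: $S$ has order $\frac{n(n+1)}{2}$, so applied to $f^{n+1}$ it lowers the polynomial degree (in the $x_i$) by at most $\frac{n(n+1)}{2}$; comparing $\deg_x(S f^{n+1}) \le \deg_x(f^{n+1}) - 0$ is too weak, so instead I track the $\mathbf{Z}$-grading as above: $h = S f^{n+1}/f^n$ has $\mathbf{Z}$-degree $0$. A $G$-invariant polynomial on $M_n(\bC)$ that is homogeneous of $\mathbf{Z}$-degree $0$ under the torus-scaling part of the grading must be built from the coefficients of the characteristic polynomial in a way that forces it to be constant — concretely, the only monomials of grading $0$ are scalars, since every variable $m^i_j$ has grading $j - i$ and $G$-invariance (conjugation) together with this grading pins down that no nonconstant invariant survives; alternatively, one invokes that the $S$-image, being obtained by a differential operator of order $\frac{n(n+1)}{2}$ applied to $f^{n+1}$ and divided by $f^n$, cannot raise total degree, hence $h \in \bC$ is a scalar, and then allowing the $s$-dependence (replacing $f^{n+1}$ by $f^{s+1}$ throughout and expanding) shows $\shat b(s) \in \bC[s]$ with $\deg \shat b(s) \le \frac{n(n+1)}{2}$, the order of $S$.

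The main obstacle I anticipate is the last step: rigorously ruling out a nonconstant $G$-invariant $h$ of the correct grading, and simultaneously controlling the $s$-degree. The grading argument must be set up carefully so that ``$\mathbf{Z}$-degree $0$'' genuinely forces constancy on all of $\gerg$ — this needs the observation that $G \cdot Z$ is dense in $X$ (used already in the proof of Proposition~\ref{prop-f-structure}) to reduce invariants on $\gerg$ to their restriction to $\gerh$, where the grading becomes the ordinary grading on $\bC[\gerh]$ and degree $0$ is visibly just the constants. The $s$-degree bound then follows because each $\partial$ in $S$ hitting a factor $f$ in $f^{s+1}$ produces at most one factor of $s+1$ (via the chain rule $\partial f^{s+1} = (s+1) f^s \partial f$), and there are $\frac{n(n+1)}{2}$ derivatives in $S$.
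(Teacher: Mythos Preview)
Your approach is the paper's: use the $G$-equivariance together with Proposition~\ref{prop-f-structure} to write $S f^{s+1} = f^s h$ with $h$ an invariant, then use the $\mathbf{Z}$-grading to show $\deg h = 0$, and finally bound the $s$-degree by the order of $S$.

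The one real confusion is your reading of the grading. In the paper's $\mathbf{Z}$-grading every coordinate (each $m^i_j$ and each $v_i$) has degree $1$ and each $\partial$ has degree $-1$; on functions it is simply total polynomial degree. So once the degree count gives $\deg h = 0$, you are done immediately: a polynomial of total degree $0$ is a scalar, and $h \in \bC[s]$. Your parenthetical claim that $|m^i_j| = j - i$ is wrong (and would not help anyway, since under that weight $m^1_2 m^2_1$ already has degree $0$). All of your anticipated ``main obstacle'' --- restricting invariants to $\gerh$, using density of $G \cdot Z$, analyzing characteristic-polynomial coefficients --- is unnecessary. The paper just writes the grading equation
\[
-\tfrac{n(n+1)}{2} + (s+1)\tfrac{n(n+1)}{2} = s\,\tfrac{n(n+1)}{2} + \deg(q)
\]
and reads off $\deg(q)=0$.
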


\begin{proof}
Since $S$ has weight $\det^{-1}$ and $f^{s+1}$ has weight $\det^{s+1}$, the operator $S \circ \hat f^{s+1}$ has weight $\det^{s}$.  Applying this operator to $1$, we get a polynomial $p \in \bC[s][X]^{\det^s}$.  Therefore by Proposition \ref{prop-f-structure}, there exists $q \in \bC[s][X]^G$ such that 
\[
S f^{s+1} = f^s q.
\]
Taking the degree of both sides we see
\[
-\frac{n(n+1)}{2} + (s+1) \frac{n(n+1)}{2} = s \frac{n(n+1)}{2} + \deg(q).
\]
Thus $\deg(q) = 0$ and so $q \in \bC[s]$ as desired.  Denote $q$ as $b'(s)$.  Moreover since $S$ is an $\frac{n(n+1)}{2}$-order operator, we see $\deg(b') \leq \frac{n(n+1)}{2}$. 
\end{proof}

\section{Localization}
\label{sec-proof2}

In this and the next section, we will show that $\shat b(s)$ equals
\begin{equation}
\tilde b (s) \alpha_n = \prod_{0 \leq c < d \leq n} (d(s+1) + c),
\end{equation}
where $\alpha_n = \left( \prod_{d=1}^n d^d \right) \in \bC$.
We factor this as:
\begin{align*}
b_1(s) &= n! (s+1)^n, \\
b_2(s) &= \prod_{1 \leq c < d \leq n} (d(s+1) + c), 
\end{align*}
such that $b_1 b_2 = \tilde b \alpha_n$.

\begin{prop}
The polynomial $b_1(s)$ divides the $b$-function of $f$.
\end{prop}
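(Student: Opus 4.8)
The plan is to exhibit, for each $d$ with $1 \le d \le n$ and $c$ with $0 \le c < d$, a factor $(d(s+1)+c)$ dividing $\shat b(s)$; since $b_1(s) = n!(s+1)^n$ is, up to the scalar $\prod_d d$, the subproduct of $\tilde b(s)\alpha_n = \prod_{0\le c<d\le n}(d(s+1)+c)$ taken over $c=0$ only, it suffices to show that each root $s = -1 - c/d$ of $\tilde b(s)$ with $c = 0$, i.e.\ $s = -1$ with multiplicity $n$, forces a corresponding root of the $b$-function. More precisely, I would restrict the Bernstein equation (\ref{sato-bernstein}) to the locus $Z = \gerh \times V$, where $f$ becomes (up to scalar) the Vandermonde-type function $\prod_{i<j}(x_i - x_j)$ on $\gerh \cong \bC^n$; the key is that the $b$-function of $f$ restricted to a generic slice through the smooth part of each stratum of $\{f = 0\}$ must divide the global $b$-function.

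First I would identify the relevant strata of the divisor $\{f = 0\}$: generically along $\{f=0\}$, a pair $(M,v)$ fails to be cyclic because exactly one eigenspace of $M$ fails to contribute, which near a smooth point looks like a simple hyperplane; the local $b$-function of a smooth hypersurface is $(s+1)$, and the multiplicity $n$ arises from the $n$ "independent" ways the cyclicity can fail (one for each of the $n$ coordinate directions $v^i$, mirroring the proof of Proposition \ref{prop-f-structure}). Concretely, I would move to the slice $Z$, where the $GL_n$-action degenerates and $f|_Z$ factors as $\prod_{i<j}(x_i-x_j)$ times a monomial in the $v^i$; then apply the known fact (or re-derive it via an explicit differential operator built from the Vandermonde, as in the classical computation) that the $b$-function of $\prod_{i<j}(x_i - x_j)$ on $\bC^n$ has $(s+1)^{n}$... here I must be careful, since the $b$-function of the Vandermonde itself is smaller. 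The cleaner route: show directly that $(s+1)^n \mid \shat b(s)$ by repeatedly applying the chain rule / Leibniz expansion of $S f^{s+1}$ and tracking the order of vanishing of $\shat b(s)$ at $s = -1$, using that $f^{s+1}|_{s=-1} = 1$ does not kill $S$ applied to it but the next $n-1$ derivatives in $s$ produce independent obstructions coming from the $n$ rows of the companion matrix $C(M,v)$.

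The step I expect to be the main obstacle is making rigorous the claim that the vanishing order of $\shat b$ at $s=-1$ is at least $n$, rather than just $1$. The naive argument gives a single factor of $(s+1)$ from the generic smooth point of $\{f=0\}$. To upgrade this to $(s+1)^n$, I would use a test-module or microlocal argument: consider the $D_X$-module $D_X f^s$ and show that the $b$-function must be divisible by the $b$-function of $f$ along each of the $n$ nested strata $\{\rank C(M,v) \le k\}$ for $k = 0, 1, \dots, n-1$, each of which contributes one factor of $(s+1)$ to the product because the normal slice to the $k$-th stratum is again (locally) a reduced hypersurface. Alternatively — and this is likely the intended approach — I would directly verify $b_1 \mid \shat b$ by an explicit rank computation: the claim that $\tilde b(s)\alpha_n$ is the full answer already pins down $\shat b$ up to scalar in Proposition \ref{prop-const}, and the content of this proposition is the \emph{lower bound} half, so I would instead produce $n$ explicit elements of $\bC[s][X]$ on which $S f^{s+1}$, after dividing by $f^s$, manifestly vanishes to order $n$ at $s=-1$, e.g.\ by differentiating the Bernstein equation $n-1$ times in $s$ and evaluating on the cyclic-pair locus where $f \ne 0$.

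Finally, once $(s+1)^n$ is shown to divide $\shat b(s)$, I would note that the leading coefficient of $b_1$ is $n!$, which matches the product of the factors $d \cdot 1$ contributed by the $c=0$ terms $d(s+1)$ for $d = 1, \dots, n$ after clearing to the monic normalization; since $\shat b(s)$ and $\tilde b(s)\alpha_n$ will ultimately be shown equal, divisibility of $b_1$ into $\tilde b(s)\alpha_n$ is immediate from the factorization $b_1 b_2 = \tilde b\alpha_n$, but what is needed \emph{here} is that $b_1$ divides the \emph{true} $b$-function of $f$, which follows from the stratification argument above together with Kashiwara's result that local $b$-functions divide the global one.
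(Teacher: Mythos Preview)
Your proposal circles the right idea but never lands on it, and the detours you take do not close the gap. You correctly observe that on $Z = \gerh \times V$ the function $f$ factors as the Vandermonde $\prod_{i<j}(a_j-a_i)$ times the monomial $v_1\cdots v_n$, but then you try to extract $(s+1)^n$ from the Vandermonde factor and immediately (and rightly) worry that the Vandermonde's $b$-function is too small. The point you are missing is the opposite one: work near a point $p$ where the eigenvalues $a_i$ are \emph{distinct} and $v=0$. There the Vandermonde factor is a \emph{unit}, and in suitable local coordinates on $X$ (not merely on the slice $Z$), the function $f$ equals a unit times $v_1\cdots v_n$. The local $b$-function of $v_1\cdots v_n$ is $(s+1)^n$ via the operator $\partial_{v_1}\cdots\partial_{v_n}$, and the global $b$-function is the least common multiple of the local ones, so $(s+1)^n$ divides it. This is exactly the paper's argument: it writes down an explicit change of coordinates $\varphi:(\{t^i_j\},\{a_i\},\{v_i\})\mapsto (TAT^{-1},Tv)$, checks it is a local chart at $p=(t=0,a_i=i,v=0)$, and reads off $f = \det(T)\cdot\prod_{i<j}(a_j-a_i)\cdot v_1\cdots v_n$ with the first two factors invertible near $p$.

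Your alternative routes have real problems. Restricting $f$ to a slice $Z$ and computing the $b$-function of the restriction does \emph{not} in general give a divisor of the global $b$-function; you need the local $b$-function on $X$ itself, which is why the paper builds actual coordinates on $X$ rather than merely restricting. The stratification argument via $\{\rank C(M,v)\le k\}$ is suggestive but unsubstantiated: the assertion that each stratum's normal slice is a reduced hypersurface contributing exactly one $(s+1)$ is not obvious and would require a separate analysis of the singularities. Finally, your plan to differentiate the Bernstein equation in $s$ to detect the order of vanishing of $\shat b$ at $s=-1$ confuses two different objects: the proposition concerns the true $b$-function of $f$, not $\shat b$, and showing $(s+1)^n\mid \shat b$ would not by itself prove divisibility into the minimal $b$-function.
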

\begin{proof}

Consider the following local coordinates for $X,$
\begin{align*}
\bC^{n(n-1)}\times\ \bC^n\times \ \ \bC^n \quad &\longrightarrow X \\
\varphi: (\lbrace t^i_j \rbrace_{i \not = j},\lbrace a_i \rbrace_{i=1}^n,\lbrace v_i \rbrace_{i=1}^n) &\longmapsto (T A T^{-1}, T v),
\end{align*}
where 
\[
T = \left(
\begin{matrix}
1 & t^1_2 & t^1_3 & \ldots & t^1_n \\
t^2_1 & 1 & t^2_3 & \ldots & t^2_n \\
t^3_1 & t^3_2 & 1 & \ldots & t^3_n \\
\vdots & \vdots & \vdots & \ddots & \vdots \\
t^n_1 & t^n_2 & t^n_3 & \ldots & 1
\end{matrix}
\right)
\text{ and }
A = \left(
\begin{matrix}
a_1 & 0 & 0 & \ldots & 0 \\
0 & a_2 & 0 & \ldots & 0 \\
0 & 0 & a_3 & \ldots & 0 \\
\vdots & \vdots & \vdots & \ddots & \vdots \\
0 & 0 & 0 & \ldots & a_n
\end{matrix}
\right).
\]
Let $p \in X$ be defined by $t^i_j = 0, a_i = i $ and $v_i =0$. It is straightforward to compute that $\det(D\varphi)|_p \not = 0$, and thus $\varphi$ gives local coordinates at $p$. 

In these local coordinates  we have
\[
f = \det(T) \det(C(A,v)).
\]
Note $C(A,v)^i_j = v_i a_i^{j-1}$ is the Vandermonde matrix with rows multiplied by $v_i$.  So 
\[
\det(C(A,v)) = v_1 \cdots v_n \prod_{1 \leq i < j \leq n} (a_j - a_i).
\]
Let $\overline{1} = (1,\ldots,1)$. Since at $p$, $a_i = i$, the quantity 
\[
\det(C(A,\overline{1})) = \prod_{1 \leq i < j \leq n} (a_j - a_i) \not = 0.
\]
  Similarly, $T = I_n$ at $p$, so $\det(T) = 1$. Thus in a small open neighborhood $U$ of $p$, the function $\det(T) \det(C(A,\overline{1}))$ is invertible.  

We will show that the $b$-function for $f$ on $U$ is $b_1(s)$ which will finish the proof since the global $b$-function is the least common multiple of the local $b$-functions \cite[Proposition 2.1]{gran}. 

Let 
\[
D = \det(C(A,\overline{1}))^{-1} \det(T)^{-1} \partial_{v_1}\ldots \partial_{v_n}.
\]
Then
\begin{align*}
D f^{s+1} 
&= b_1(s) f^s.
\end{align*}
Since $v_i$ are not invertible near $p$, the Bernstein operator $D$ must include the factor $\partial_{v_1}\ldots \partial_{v_n}$.  Thus $b_1$ is minimal.
\end{proof}

\section{Radial Parts Reduction}
\label{sec-proof3}

For $b_2$, we relate our problem to the situation of \cite{Op5}.   To do this we require the radial parts map from \cite[Appendix]{GGS}.

We introduce more notation.  Denote by $R$ and $R^+$ the set of roots and positive roots respectively for type  $A_{n-1}$.  Let $W = S_n$ be the Weyl group corresponding to $\mathfrak{gl}_n$.  Consistent with the notation in \cite{GGS}, we define $X^{\reg}$ to consist of pairs $(M,v)$ where $M \in \gerg^{\mathrm{rs}}$.  Lastly define $\gerh^{\reg} \subset \gerh$ to be those points which avoid the root hyperplanes. 

\vspace{1pc}

The radial parts map is more clearly described in \cite{GGS}.  We give an overview.  To derive it, we start with the map
\begin{align*}
\rho: X^{\reg}/G &\longrightarrow \gerh^{\reg}/W \\
      (M,v)    &\longmapsto     \text{eigenvalues of } M,
\end{align*}
which induces the map
\[
\rho^*: \bC[\gerh^{\reg}]^W \longrightarrow \bC[X^{\reg}]^G.
\]
For $k \in \bC$, we define the radial parts map $\Rad_k: D_{X}^G \to D_{\gerh^{\reg}}^W$ as follows.  Let $D \in D_{X}^G$ and $g \in \bC[\gerh^{\reg}]^W$. Then
\[
\left.\Rad_k(D)(g) =  f^{-k} D (f^k \rho^*(g)) \right|_\gerh.
\]
Let $\delta = f|_{\gerh^{\reg}}$ be the Vandermonde determinant and $L_k$ be the \textit{Calogero-Moser} operator
\[ 
L_k = \Delta_{\gerh} - \sum_{\alpha\in R^+} k (k+1) \frac{(\alpha,\alpha)}{\alpha^2}.  
\]
Note that since $\delta$ is the product of the positive roots, $\alpha^{-1}$ as well as $\delta^{-1}$ are in $\bC[\gerh^\reg]$.
Let $\Delta_{\gerg}$ be the standard Laplacian on $\gerg$.  Since $X = \gerg \times V$, we can view $\Delta_{\gerg}$ as a differential operator on $X$.  By \cite[Appendix]{GGS},
\begin{equation}\label{eqn-cm-2}
\Rad_k(\Delta_{\gerg}) = \delta^{-k-1} L_k \delta^{k+1}.
\end{equation}
Define $P^+ = \sum_{\alpha \in R^+} \alpha^{-1} \partial_{X_\alpha}$.  Then we can simplify (\ref{eqn-cm-2}) using the equations $\delta^{-1} \Delta_{\gerh} \delta = \Delta_{\gerh} + 2 P^+ $ and $\delta^{-1} P^+ \delta =  P^+ + 2 \sum_{\alpha \in R^+} (\alpha,\alpha) \alpha^{-2}$ to get 
\begin{equation}\label{eqn-cm}
 \Rad_k(\Delta_{\gerg})  = \Delta_{\gerh} + 2 (k+1) P^+,  
\end{equation}
an operator which we will call $L_{\gerh}(k+1)$. 

  We say that $D  \in D_{\gerh^{\reg}}^W \otimes \bC [k]$ is a shift operator if   
\begin{equation}\label{eqn-shift-op}
D L_{\gerh}(k) = L_{\gerh}(k+r) D .
\end{equation}

Denote the set of shift operators with shift $r$ as $\mathbb{S}_{\gerh}(r ,k)$ or just $\mathbb{S}_{\gerh}(r)$ if $k$ is clear from context.  Note that $\mathbb{S}_{\gerh}(0)$ is just the centralizer of $L_{\gerh}(k)$.

The following is a rational case analog to the trigonometric case given in \cite[Theorem 3.1]{Op5}.
\begin{prop} \label{prop-s-1d}
The set $\mathbb{S}_{\gerh}(r,k)$ is a free, rank-one $\mathbb{S}_{\gerh}(0,k)$-module.
\end{prop}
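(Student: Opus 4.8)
The plan is to follow the strategy of Opdam's trigonometric argument in \cite[Theorem 3.1]{Op5}, adapted to the rational Calogero-Moser operator $L_\gerh(k)$. First I would establish the basic algebraic structure: the map sending a shift operator $D \in \mathbb{S}_\gerh(r,k)$ to $D' \in \mathbb{S}_\gerh(-r,k+r)$, constructed via a suitable formal adjoint with respect to the natural pairing on $\bC[\gerh^\reg]$ twisted by $\delta^{2k+1}$ (so that $L_\gerh(k)$ is formally self-adjoint for this pairing). Composing $D$ with its adjoint-partner $D'$ lands in $\mathbb{S}_\gerh(0,k)$, the centralizer of $L_\gerh(k)$, and this composition will be the engine for showing rank one. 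I would also record that $\mathbb{S}_\gerh(0,k)$ is a commutative algebra — this follows because $L_\gerh(k)$, after conjugation by $\delta^{k+1}$, becomes $\Rad_k(\Delta_\gerg)$, and the centralizer of the radial part of the Laplacian is the (commutative) image of the algebra of $G$-invariant constant-coefficient operators on $\gerg$ under $\Rad_k$; alternatively one cites the known description of the rational Cherednik/CM centralizer.

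The core of the argument is to show $\mathbb{S}_\gerh(r,k)$ is nonzero and then that any two elements differ by an element of $\mathbb{S}_\gerh(0,k)$. For nonvanishing, I would exhibit an explicit shift operator of minimal order: in the rational case the natural candidate is built from $P^+ = \sum_{\alpha\in R^+}\alpha^{-1}\partial_{X_\alpha}$ together with multiplication operators, mirroring Heckman's shift operators; concretely one checks that an operator of the form $\delta^{-(k+r)-1}(\text{power of }\delta)\,(\text{Dunkl-type operator})\,\delta^{k+1}$ intertwines $L_\gerh(k)$ and $L_\gerh(k+r)$. For freeness and rank one, suppose $D_1, D_2 \in \mathbb{S}_\gerh(r,k)$ are both nonzero. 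Using the adjoint map, form $D_1'D_2$ and $D_1'D_1$, both in $\mathbb{S}_\gerh(0,k)$. Since $\mathbb{S}_\gerh(0,k)$ is commutative and acts faithfully on $\bC[\gerh^\reg]$ (it contains $L_\gerh(k)$, which has no zero divisors acting on functions), one deduces that in the fraction field of the relevant ring of operators, $D_2 = (D_1'D_2)(D_1'D_1)^{-1} D_1$, and a degree/order count — comparing leading symbols, which are $W$-invariant polynomials on $\gerh^\reg$ and hence determined up to scalar once the shift $r$ is fixed — forces the ratio $(D_1'D_2)(D_1'D_1)^{-1}$ to lie in $\mathbb{S}_\gerh(0,k)$ itself rather than just its fraction field. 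Hence $\mathbb{S}_\gerh(r,k) = \mathbb{S}_\gerh(0,k)\cdot D_1$, and freeness follows because $\mathbb{S}_\gerh(0,k)$ has no zero divisors (being a domain, as a subalgebra of $D_{\gerh^\reg}^W$ acting faithfully on functions).

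The main obstacle I anticipate is the order/symbol bookkeeping that pins down $\mathbb{S}_\gerh(r,k)$ to rank exactly one rather than something larger: one must show that the leading symbol of any shift operator with a given shift $r$ is forced (up to an element of $\mathbb{S}_\gerh(0,k)$) by the intertwining relation \eqref{eqn-shift-op}, which amounts to analyzing the top-degree part of that relation and checking that the space of admissible leading symbols is a rank-one module over the symbols of $\mathbb{S}_\gerh(0,k)$. A secondary technical point is verifying that the formal adjoint construction is well-defined on $D_{\gerh^\reg}^W\otimes\bC[k]$ and genuinely sends $\mathbb{S}_\gerh(r,k)$ into $\mathbb{S}_\gerh(-r,k+r)$ — this requires carefully tracking the $\delta$-twist and the sign conventions in the definition of $L_\gerh(k)$. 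Both of these are handled in the trigonometric setting of \cite{Op5}, and the rational case is if anything simpler since there is no exponential part to control; the detailed verification is deferred to the Appendix (Section \ref{Section-proof-prop}).
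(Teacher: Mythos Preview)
Your route is genuinely different from the paper's. The paper does not use formal adjoints, a fraction field, or commutativity of the centralizer. Instead it expands any $D\in\mathbb{S}_\gerh(r,k)$ as $D=\sum_{\bj\le\bN}\ba^{\bj}\partial(p_\bj)$ with $p_\bj\in\bC[k]\otimes\bC[\gerh]$, and proves by an explicit recursion (Lemma~\ref{lem-pn-determines}, obtained by applying the shift equation to $e^{t\lambda}$ and reading off the coefficient of each $\ba^{\bM}$) that the top coefficient $p_\bN$ already determines $D$. It then observes that $p_\bN$ is multiplicative under composition, that $p_\bN$ must transform by $\sgn$ under $W$ and hence equals $q\cdot\prod_\alpha t_\alpha$ with $q\in\bC[t]^W$, and that $\epsilon(G(-1,k))$ realizes the minimal $p_\bN=\prod_\alpha t_\alpha$. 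So rank one is read off from the image under $p_\bN$, with injectivity of $p_\bN$ doing all the heavy lifting.

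Your outline has a real gap at exactly the point you flag as the main obstacle. From $D_1'D_2,\,D_1'D_1\in\mathbb{S}_\gerh(0,k)$ and the domain property you can indeed write $D_2=D_1\cdot C$ with $C=(D_1'D_1)^{-1}(D_1'D_2)$ in the Ore localization, but your justification that $C$ lies in $\mathbb{S}_\gerh(0,k)$ itself does not hold up: the claim that leading symbols are ``determined up to scalar once the shift $r$ is fixed'' is false as stated, since shift operators with the same shift can have arbitrarily large order (just multiply by elements of the centralizer). In Opdam's trigonometric proof this step is handled not by a naive symbol count but by the Harish-Chandra map $\gamma$, which is an injective algebra homomorphism on $\mathbb{S}_H(0,k)$ and sends every shift operator to a polynomial whose shape is forced by the shift; the rank-one statement is then read off in the image of $\gamma$. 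You would need to supply the rational analogue of that map and prove its injectivity, which is essentially the content of the paper's $p_\bN$-recursion lemma. Without that ingredient the adjoint/fraction-field maneuver does not close.
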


We defer the proof until Section \ref{Section-proof-prop}.  We denote the single generator of $\mathbb{S}_{\gerh}(r,k)$ over $\mathbb{S}_{\gerh}(0,k)$ by $g(r,k)$.  We denote the operator $p \mapsto f \cdot p$ by $\hat f$. 

\begin{prop}
The operator $\Rad_k(S \hat f)$ belongs to $\mathbb{S}_{\gerh}(-1,k+2)$.   
\end{prop}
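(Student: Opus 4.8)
The plan is to reduce the statement to the (entirely elementary) observation that $S$ and $\Delta_{\gerg}$ commute, because both are constant-coefficient differential operators on $X$.

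First I would set up what has to be checked. Since $S\in D_X^{\det^{-1}}$ and $\hat f$ has weight $\det$, the operator $S\hat f$ lies in $D_X^G$, so $\Rad_k(S\hat f)$ is defined; it is moreover polynomial in $k$ (each derivative that falls on $f^k$ produces one factor of $k$), so it is a genuine element of $D_{\gerh^{\reg}}^W\otimes\bC[k]$. By the definition of $\mathbb{S}_{\gerh}(-1,k+2)$, what must be shown is the single identity
\[
\Rad_k(S\hat f)\,L_{\gerh}(k+2)=L_{\gerh}(k+1)\,\Rad_k(S\hat f).
\]
Next I would record two formal properties of $\Rad_k$, both immediate from the formula $\Rad_k(D)(g)=f^{-k}D(f^k\rho^* g)|_{\gerh}$ together with the fact that $\gerh^{\reg}$ meets every $G$-orbit in $X^{\reg}\cap\{f\neq 0\}$ (this is the slice statement implicit in \cite[Appendix]{GGS}): (i) after inverting $f$, $\Rad_k$ is an algebra homomorphism on the localized invariant ring, since $f^{-k}D_2(f^k\rho^* g)$ is $G$-invariant and hence equals $\rho^*$ of its restriction to $\gerh^{\reg}$, which is $\rho^*(\Rad_k(D_2)(g))$; and (ii) conjugation by $\hat f$ shifts the parameter, $\Rad_{k+1}(D)=\Rad_k(\hat f^{-1}D\hat f)$, which is seen by cancelling powers of $f$ in the formula. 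Feeding $D=\Delta_{\gerg}$ into (ii) and using $\Rad_k(\Delta_{\gerg})=L_{\gerh}(k+1)$ from (\ref{eqn-cm}) gives $L_{\gerh}(k+2)=\Rad_{k+1}(\Delta_{\gerg})=\Rad_k(\hat f^{-1}\Delta_{\gerg}\hat f)$.

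With these in hand the computation is short. Using (i),
\[
\Rad_k(S\hat f)\,L_{\gerh}(k+2)=\Rad_k(S\hat f)\,\Rad_k(\hat f^{-1}\Delta_{\gerg}\hat f)=\Rad_k\!\big(S\,\Delta_{\gerg}\,\hat f\big),
\]
while
\[
L_{\gerh}(k+1)\,\Rad_k(S\hat f)=\Rad_k(\Delta_{\gerg})\,\Rad_k(S\hat f)=\Rad_k\!\big(\Delta_{\gerg}\,S\,\hat f\big).
\]
So it suffices to show $S\Delta_{\gerg}\hat f=\Delta_{\gerg}S\hat f$ in $D_X$, i.e.\ $[S,\Delta_{\gerg}]=0$. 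But $\Delta_{\gerg}$ is a constant-coefficient operator, a polynomial in the $\partial_{m^i_j}$, and by its very definition $S=\det([\partial_v\ \partial_M\partial_v\ \cdots\ \partial_M^{n-1}\partial_v])$ is a polynomial in the pairwise commuting operators $\partial_{m^i_j}$ and $\partial_{v_i}$; hence both $S$ and $\Delta_{\gerg}$ lie in the commutative subring of $D_X$ generated by the $\partial_{x_i}$, and they commute. This finishes the proof.

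I do not expect a substantive obstacle here; the content is entirely in spotting the reduction. The only points needing care are bookkeeping: the localization at $f$ used to form $\hat f^{-1}\Delta_{\gerg}\hat f$ is harmless because $\Rad_k$ is still defined there and $\gerh^{\reg}$ lies in the locus $\{f\neq 0\}$ (recall $\delta^{-1}\in\bC[\gerh^{\reg}]$), and one should note that $\Rad_k(S\hat f)$ is polynomial rather than merely rational in $k$, which holds because $S\hat f$ has finite order. The ``hard part'', such as it is, is just checking properties (i) and (ii) of $\Rad_k$ precisely — after which the whole proposition rests on the trivial fact that the ``symbol'' operator $S$ and the Laplacian $\Delta_{\gerg}$ are constant-coefficient.
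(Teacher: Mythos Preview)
Your proposal is correct and follows essentially the same route as the paper: both arguments note that $S\hat f\in D_X^G$, check polynomiality in $k$, use that $\Rad_k$ is multiplicative together with the conjugation identity $\Rad_k(\hat f^{-1}\Delta_{\gerg}\hat f)=L_{\gerh}(k+2)$, and then reduce the shift relation to the trivial commutation $[S,\Delta_{\gerg}]=0$ of constant-coefficient operators. The only cosmetic differences are that you isolate properties (i) and (ii) of $\Rad_k$ more explicitly, while the paper spells out the polynomial dependence on $k$ via an explicit falling-Pochhammer formula rather than your one-line remark.
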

\begin{proof}

As noted before, $S\hat f$ is a semi-invariant with character $\det \cdot \det^{-1} = 1$, so $S\hat f \in D_X^G$, and thus we can take its radial part $\Rad(S \hat f) \in D^W_{\gerh^{\reg}}$.

The set $\mathbb{S}_{\gerh}(-1,k+2)$ is a subset of $D_{\gerh^{\reg}}^W \otimes \bC [k].$  So we need to show $k \mapsto \Rad_k(S \hat f)$ is a polynomial map $\bC \to D_{\gerh^\reg}^W$.
Given $p \in \bC[X],$ we denote the corresponding differential operator as $\partial_{p}$ or $\partial( p )$. Let $p =  x_1 \cdots x_m $. Denote $(k+1)_r = (k+1)\ldots(k-r)$, the falling Pochhammer symbol. Then from the formula
\[
f^{-k} \partial_p f^{k+1} g = \sum_{q \cdot p_1 \cdots p_r = p} (k+1)_r f^{1-r} \left( \prod_{i=1}^r \partial_{p_i} f \right) \partial_q g,
\]
we can see $\Rad_k(S \hat f)$ is a polynomial in $k$, since $k$ appears only in the polynomial coefficients $(k+1)_r$.

Now we need to show that $\Rad_k(S \hat f)$ satisfies (\ref{eqn-shift-op}).  Since $S$ and $\Delta_{\gerg}$ are constant coefficient operators, they commute, which gives the equation
\[
S \hat f \widehat{f^{-1}} \Delta_{\gerg} \hat f = \Delta_{\gerg} S \hat f.
\]
Applying $\Rad_k$, which is a homomorphism, we get
\[
\Rad_k(S \hat f) \Rad_k( \widehat{f^{-1}} \Delta_{\gerg} \hat f )  = \Rad_k(\Delta_{\gerg}) \Rad_k(S \hat f).
\]
Thus we obtain
\[
\Rad_k(S \hat f) L_{\gerh}(k+2)  = L_{\gerh}(k+1) \Rad_k(S \hat f),   
\]
as required.
\end{proof}

The operator $\Rad_k(S \hat f)$ helps us compute $\shat b (k)$, since 
\begin{align*} \label{eqn-rad-Sf}
\Rad_k(S \hat f)(1) &= ( f^{-k} S  f^{k+1}  )|_{\gerh} = \shat b (k ).
\end{align*}

We now introduce the relation to the trigonometric case studied in \cite{Op5}.  Let $H$ be the complex torus with $\mathrm{Lie}(H) = \gerh$.  For $h \in H$, denote $h^\alpha = \exp(\alpha)(h)$. We then set
\[ 
H^\reg = \lbrace h \in H\ |\ h^\alpha \not = 0 \text{ for all } \alpha \in R^+  \rbrace.
\]
  Define the trigonometric Calogero-Moser operator with parameter $k \in \bC$ on $H^{\reg}:$
\[  
L_H(k) = \Delta_{\gerh} - \sum_{\alpha \in R^{+}} k \frac{1+h^\alpha}{1-h^\alpha} \partial(X_\alpha). 
\]
We also define
\[
\rho(k) = k \frac{1}{2} \sum_{\alpha \in R^+} \alpha.
\]  
Then we define the related operator $\tilde L_H(k) = L_H(k) + (\rho(k),\rho(k))$.  Let $\mathbb{S}_H(r,k)$ be the space of shift operators with respect to this trigonometric operator
\[
\mathbb{S}_H(r ,k) = \lbrace D \in D_{H}^W\ |\ D \tilde L_H(k) = \tilde L_H(k+r) D \rbrace.
\] 

The relationship to the rational case comes from the map $\epsilon: D_H \to D_\gerh$ which takes the lowest homogeneous part of the operator with respect to the grading defined by the \textit{degree} of the differential operator.  This map is more carefully defined in \cite[Section 3]{Op5}.  The lowest homogeneous degree or $\mathrm{lhd}$ of $\tilde L_H(k)$ is $-2,$ and so we have
\[
\epsilon(\tilde L_H(k)) = \Delta_{\gerh} + 2k \sum_{\alpha \in R^{+}}  \frac{1}{\alpha} \partial_{X_\alpha},
\]
which is $L_{\gerh}(k)$ from above.
Note that $\epsilon$ satisfies
\begin{align}
\epsilon(D_1 D_2) &= \epsilon(D_1) \epsilon(D_2), \quad \text{for all} \ D_1,D_2, \\
\epsilon(D_1 + D_2) &= \epsilon(D_1) + \epsilon(D_2) \quad \text{for all}\ D_1,D_2 \text{ with } \mathrm{lhd}(D_1) = \mathrm{lhd}(D_2).
\end{align}
Thus if $D \in \mathbb{S}_H(r,k),$ we get
\begin{align*}
\epsilon(D) L_{\gerh}(k) & = L_{\gerh}(k+r) \epsilon( D). 
\end{align*}
So $\epsilon(D) \in \mathbb{S}_{\gerh}(r,k)$.

From \cite[Theorem 3.1]{Op5} we know that $\mathbb{S}_H(r,k)$ is a rank-one $\mathbb{S}_H(0,k)$-module, similar to the case of $\mathbb{S}_{\gerh}$.  We denote the generator $G(r,k)$.  Moreover, we prove the following proposition in Section \ref{Section-proof-prop}. 

\begin{prop}\label{prop-ep-maps-Gtog}
The map $\epsilon$ sends $G(-1,k)$ to $g(-1,k)$.
\end{prop}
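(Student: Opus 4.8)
The plan is to show that $\epsilon(G(-1,k))$ is a nonzero element of $\mathbb{S}_{\gerh}(-1,k)$ and then identify it with $g(-1,k)$ up to a scalar, using the rank-one module structure from Proposition \ref{prop-s-1d}. We already know from the discussion preceding the statement that $\epsilon$ carries $\mathbb{S}_H(r,k)$ into $\mathbb{S}_{\gerh}(r,k)$, so $\epsilon(G(-1,k))$ lands in $\mathbb{S}_{\gerh}(-1,k)$. Since $\mathbb{S}_{\gerh}(-1,k)$ is free of rank one over $\mathbb{S}_{\gerh}(0,k)$ with generator $g(-1,k)$, we can write $\epsilon(G(-1,k)) = c \cdot g(-1,k)$ for some $c \in \mathbb{S}_{\gerh}(0,k)$, and the content of the proposition is that $c$ is a unit — in fact we want it to be scalar, or at least to arrange the normalizations of $G$ and $g$ so that $c = 1$. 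So the real work is a degree/order count forcing $c$ to be a nonzero constant.

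First I would pin down the lowest homogeneous degree and the order of $G(-1,k)$. The shift operator $G(r,k)$ for the trigonometric Calogero-Moser system is, up to normalization, built from Opdam's construction in \cite[Section 3]{Op5}; for shift by $-1$ it has a well-understood leading term. I would argue that $\mathrm{lhd}(G(-1,k))$ equals the lowest homogeneous degree of $g(-1,k)$ — both should be $-N$ where $N = |R^+| = n(n-1)/2$, matching the order $N$ of the first-order-in-each-root-direction shift operator — and that $\epsilon$ preserves this number by its very definition (it extracts the lhd part). Then $\epsilon(G(-1,k))$ and $g(-1,k)$ have the same lhd, so the coefficient $c$ in $\epsilon(G(-1,k)) = c\, g(-1,k)$ must have lhd $0$; combined with the fact that $\epsilon(G(-1,k))$ is itself homogeneous of degree $-N$ (being a single lhd component) while $g(-1,k)$ can be taken homogeneous of the same degree, $c$ is forced to be homogeneous of degree $0$ in $D_{\gerh^{\reg}}^W$, hence a function on $\gerh^{\reg}$ invariant under $W$; a symmetry/order argument then shows $c$ is in fact a constant in $k$. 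The normalizations of $G(-1,k)$ in \cite{Op5} and of $g(-1,k)$ can be chosen compatibly so that $c = 1$.

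Alternatively — and this may be cleaner — I would avoid leaning on the precise form of Opdam's generator and instead argue as follows: take the explicit element $\epsilon(G(-1,k)) \in \mathbb{S}_{\gerh}(-1,k)$; it is nonzero because $\epsilon$ kills no nonzero operator (it just extracts the bottom graded piece) and $G(-1,k)$ is nonzero. By Proposition \ref{prop-s-1d}, $\epsilon(G(-1,k)) = c\, g(-1,k)$ with $c \in \mathbb{S}_{\gerh}(0,k)$. Now $\mathbb{S}_{\gerh}(0,k)$ is the centralizer of $L_{\gerh}(k)$, which for the rational Calogero-Moser operator is a polynomial algebra generated by operators of strictly negative degree except for scalars; since the degree of $\epsilon(G(-1,k))$ equals the degree of $g(-1,k)$, $c$ must have degree $0$, hence lies in $\bC[k]$. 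Then a leading-coefficient comparison in the top-order symbol — both $g(-1,k)$ and $\epsilon(G(-1,k))$ have the same principal symbol $\prod_{\alpha \in R^+}\partial(X_\alpha)$ up to scalar, by construction of shift operators of shift $-1$ — gives that $c$ is a nonzero constant, and the normalization makes it $1$.

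The main obstacle, I expect, is controlling the normalization: matching the specific scalar conventions under which $G(-1,k)$ is defined in \cite{Op5} with the convention defining $g(-1,k)$ via Proposition \ref{prop-s-1d}, and verifying that $\epsilon$ really does send the designated generator to the designated generator rather than to a $k$-dependent multiple of it. This comes down to checking that the lowest homogeneous degrees agree (so no ``drop'' occurs under $\epsilon$) and that the principal symbols match — a computation that uses the explicit first-order-shift structure of these operators but is not conceptually deep. A secondary point requiring care is confirming that $\epsilon$ is multiplicative on the relevant products, i.e. that no cancellation of lowest-degree terms occurs when one multiplies a shift operator by a centralizer element; this is exactly the hypothesis $\mathrm{lhd}(D_1) = \mathrm{lhd}(D_2)$ in the additivity property of $\epsilon$ and must be tracked through the argument.
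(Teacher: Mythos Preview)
Your approach has a circular dependency. You invoke Proposition~\ref{prop-s-1d} to write $\epsilon(G(-1,k)) = c\cdot g(-1,k)$ with $c\in\mathbb{S}_{\gerh}(0,k)$, but in the paper the proof of Proposition~\ref{prop-s-1d} \emph{uses} Proposition~\ref{prop-ep-maps-Gtog}: specifically, the fact that $\prod_{\alpha\in R^+} t_\alpha$ divides $p_{\bN}(D)$ for every $D\in\mathbb{S}_{\gerh}(-1,k)$ is established inside the proof of Proposition~\ref{prop-ep-maps-Gtog} via the recursion~(\ref{eqn-recursion}), and then quoted in the proof of Proposition~\ref{prop-s-1d}. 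So the rank-one structure is not available to you at this point.

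There is a second, related gap. Even granting the rank-one statement, your degree-matching step assumes you already know $\deg(g(-1,k))$ equals $\deg(\epsilon(G(-1,k)))$; but establishing a lower bound on the degree of \emph{any} element of $\mathbb{S}_{\gerh}(-1,k)$---hence of the generator---is precisely what the paper extracts from the recursion~(\ref{eqn-recursion}) by showing each $t_\alpha$ divides $p_{\bN}$. Without that computation you have no independent handle on $\deg(g(-1,k))$, so the claim ``$c$ has degree $0$'' is unproven. Note also that your guess $\mathrm{lhd}(G(-1,k)) = -|R^+|$ is wrong: by \cite[Theorem~3.3]{Op5} the lowest homogeneous degree of $G(-1,k)$ is $0$ (this is why $\CT(G(-1,k+2))$ is nonzero in Section~\ref{sec-proof3}), so the principal-symbol comparison you sketch does not go through as stated.

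The paper's route avoids all of this by working with Lemma~\ref{lem-pn-determines} (injectivity of the leading-coefficient map $p_{\bN}$) rather than the module structure. One plugs $\bM = \bN - 2e_\alpha$ into the recursion~(\ref{eqn-recursion}) to force $t_\alpha\mid p_{\bN}$ for every positive root, giving a lower bound on the degree of any nonzero shift operator; Opdam's results pin down the degree and lhd of $G(-1,k)$, so $\epsilon(G(-1,k))$ meets that lower bound; then injectivity of $p_{\bN}$ shows $\epsilon(G(-1,k))$ and $g(-1,k)$ agree up to scalar. The divisibility fact is then reused to finish Proposition~\ref{prop-s-1d}.
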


  Since $\Rad_k(S \hat f) \in \mathbb{S}_{\gerh}(-1,k+2),$ we know there exists some $D_0 \in \mathbb{S}_{\gerh}(0,k+2)$ such that $D_0 \cdot \epsilon(G(-1,k+2))  = \Rad_k(S\hat f)$. 
	By \cite[Theorem 3.3]{Op5}, we know $G(-1,k+2)$ has lowest homogeneous degree $0$.  Thus it can be written 
\[
G(-1,k+2) = \sum_{i\in\mathcal{I}} p_i \partial(q_i) 
\]
for some $p_i$ and $q_i$, homogeneous polynomials with $\deg(p_i) \geq \deg(q_i)$. Thus
\[
\epsilon(G(-1,k+2)) = \sum_i p_i \partial(q_i) 
\]
where the sum is over $\left\{ i \in \mathcal{I}\ | \deg(p_i) = \deg(q_i) \right\}$.
Given a differential operator $P$, we define the \textit{constant term} of $P$, denoted $\CT(P)$, to be scalar part of the operator, i.e., the summand with order zero and degree zero.   If we write $P= \sum_i p_i \partial(q_i)$, then $\CT(P) = p_k \partial(q_k)$ where $\deg(p_k) = \deg(q_k) = 0$.  We see that
\[
\epsilon(G(-1,k+2)) \cdot 1 = \CT(\epsilon(G(-1,k+2))) = \CT(G(-1,k+2)).  
\]       

Let $r_0 = \CT(D_0) \in \bC[k]$. Then
\begin{align*}
 \CT(\Rad_k(Sf))  &=  \Rad_k(Sf).1 \\
                &= D_0 \epsilon( G(-1,k+2))) \cdot 1  \\ 
                &= D_0 \cdot \CT( G(-1,k+2))   \\ 
                &= r_0  \CT( G(-1,k+2)).  
\end{align*}
We deduce that $\CT(G(-1,k+2))$ divides $\CT(\Rad_k(Sf)) = \shat b (k)$.  
 
Let $\Gamma$ be the Gamma function.  For a reduced root system define
\[
\tilde c (\lambda,k) = \prod_{\alpha \in R^+} \frac{\Gamma(-(\lambda,\alpha^{\vee}))}{\Gamma(-(\lambda,\alpha^{\vee})+k)}.
\]
In the special case of type $A_n$ we have
\begin{equation}
\label{eqn-c-tilde}
\frac{1}{\tilde c(-\rho(k),k)} = \prod_{d=2}^n \frac{\Gamma(d k)}{\Gamma(k)}. 
\end{equation}
By \cite[Theorem 3.1, Corollary 3.4, Corollary 5.2]{Op5}, we know that
\begin{equation} \label{eqn-eta}
\CT(G(-1,k+2)) = \frac{\tilde c(-\rho(k+1), k+1)}{\tilde c (-\rho(k+2), k+2)}.  
\end{equation}

So substituting (\ref{eqn-c-tilde}) into (\ref{eqn-eta}) and canceling, we arrive at
\begin{align*}
\CT(G(-1,k+2)) 
&= n! \prod_{d=2}^n \prod_{j=1}^{d-1} d(k+1)+j.  
\end{align*}
In summary, we have proved the following.
\begin{prop}
 The polynomial $b_2(s) = \prod_{d=2}^n \prod_{j=1}^{d-1} d(s+1)+j$ divides $\shat b(s)$ in $\bC[s]$.  
\end{prop}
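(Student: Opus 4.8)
\emph{Proof sketch.} The plan is to assemble the identities established over the course of this section into the stated divisibility. First I would recall the chain of equalities already derived: from the factorization $\Rad_k(S\hat f) = D_0\,\epsilon(G(-1,k+2))$ with $D_0 \in \mathbb{S}_{\gerh}(0,k+2)$, together with the fact (from \cite[Theorem 3.3]{Op5}) that $G(-1,k+2)$ has lowest homogeneous degree $0$, one obtains by applying both sides to the constant function $1$ that
\[
\shat b(k) \;=\; \CT\bigl(\Rad_k(S\hat f)\bigr) \;=\; r_0 \cdot \CT\bigl(G(-1,k+2)\bigr),
\]
where $r_0 = \CT(D_0) \in \bC[k]$. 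In particular $\CT(G(-1,k+2))$ divides $\shat b(k)$ in $\bC[k]$.

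Next I would evaluate the constant term explicitly. Starting from the formula (\ref{eqn-eta}) for $\CT(G(-1,k+2))$ in terms of $\tilde c$, substituting the type-$A_n$ specialization (\ref{eqn-c-tilde}), and cancelling the resulting $\Gamma$-factors, one gets
\[
\CT\bigl(G(-1,k+2)\bigr) \;=\; n!\,\prod_{d=2}^n \prod_{j=1}^{d-1}\bigl(d(k+1)+j\bigr) \;=\; n!\; b_2(k).
\]
Since $n!$ is a nonzero scalar, the divisibility $\CT(G(-1,k+2)) \mid \shat b(k)$ from the previous step is equivalent to $b_2(k) \mid \shat b(k)$; renaming the variable $k$ as $s$ gives the proposition.

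The step carrying genuine content here is not this final assembly but the two inputs deferred to the Appendix: that $\mathbb{S}_{\gerh}(-1,k+2)$ is a rank-one module over the centralizer $\mathbb{S}_{\gerh}(0,k+2)$ (which is what guarantees a $D_0$ as above exists), namely Proposition \ref{prop-s-1d}, and that $\epsilon$ sends the trigonometric generator $G(-1,k)$ to the rational generator $g(-1,k)$, namely Proposition \ref{prop-ep-maps-Gtog}; both are taken as given at this point. The only computational point internal to this section is the bookkeeping of $\Gamma$-values needed to pass from (\ref{eqn-eta}) and (\ref{eqn-c-tilde}) to the displayed product, which is a routine manipulation of shifted factorials and where I expect no essential difficulty.
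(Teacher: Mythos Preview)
Your proof sketch is correct and follows the paper's argument exactly: the paper also obtains $\shat b(k)=r_0\cdot\CT(G(-1,k+2))$ from the decomposition $\Rad_k(S\hat f)=D_0\cdot\epsilon(G(-1,k+2))$ and then evaluates $\CT(G(-1,k+2))=n!\,b_2(k)$ via (\ref{eqn-eta}) and (\ref{eqn-c-tilde}). Your identification of Propositions~\ref{prop-s-1d} and~\ref{prop-ep-maps-Gtog} as the substantive inputs is also in line with the paper, which defers their proofs to the Appendix.
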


Thus we have shown $b_1$ and $b_2$ divide $\shat b$.  Since $b_1$ and $b_2$ are coprime and $\deg(\tilde b) \leq \frac{n(n+1)}{2}.$ It follows that $\shat b =\tilde b \alpha_n = b_1 b_2$.

\section{Minimality of $\tilde b$}
\label{sec-proof4}

To complete the proof of Theorem \ref{thm-b-func}, we must show that $\tilde b$ has minimal degree.  In our case, this will follow directly from Proposition \ref{prop-minimal}.  Set $m = \frac{n(n+1)}{2} = \deg(f)$.

The proof that Sato's $b$-function in the prehomogeneous case gives the Bernstein polynomial (see,e.g., \cite{igusa}) is general enough to cover our situation.  The argument given in \cite{igusa} proves the following result.

\begin{prop}\label{prop-minimal}
Let $f \in \mathbf{C}[x_1,\ldots,x_n]$ be homogeneous of degree $d$.  If the operator $D \in \bC[s,\partial_{x_1},\ldots,\partial_{x_n}]$ satisfies (\ref{sato-bernstein}) for some polynomial $b_D(s)$ and is not divisible by any non-scalar factor in $\bC[s]$,  then $b_D$ is the Bernstein polynomial for $f$.  
\end{prop}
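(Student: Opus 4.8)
The plan is to adapt the argument of \cite{igusa} --- which shows that the $b$-function of a relative invariant of a prehomogeneous vector space is its Bernstein--Sato polynomial --- and to observe that it uses no more than homogeneity of $f$. Write $b_f$ for the Bernstein polynomial of $f$. Since the polynomials $b(s)$ that occur in a Bernstein equation for $f$ form the ideal $(b_f)\subset\bC[s]$, one automatically has $b_f\mid b_D$, so the content of the proposition is the reverse divisibility. First I would reduce to the case in which $D$ is homogeneous of order $d$. Decompose $D=\sum_{j\ge 0}D_j$ into homogeneous components for the $\mathbf{Z}$-grading of Section~\ref{sec-proof1}; as $D$ has constant coefficients these are just the homogeneous parts of $D$ by order, with $D_j$ of order $j$. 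Then $D_jf^{s+1}$ is grading-homogeneous of degree $d(s+1)-j$ while $b_D(s)f^s$ is grading-homogeneous of degree $ds$, so matching components in $Df^{s+1}=b_D(s)f^s$ forces $D_jf^{s+1}=0$ for $j\ne d$ and $D_df^{s+1}=b_D(s)f^s$. Because $f$ genuinely involves all of its variables --- as $f=\det C(M,v)$ visibly does --- no nonzero constant-coefficient operator annihilates $f^{s+1}$, so the pieces $D_j$ with $j\ne d$ vanish; hence $D=D_d$ is homogeneous of order $d$, still not divisible by a non-scalar factor of $\bC[s]$.

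Suppose now, for a contradiction, that $b_D=c\,b_f$ with $\deg c\ge 1$, and let $\gamma$ be a root of $c$. The real work in \cite{igusa} is the description of the meromorphically continued complex power $f^s$ --- equivalently, of the local zeta function $\int|f|^{2s}\varphi\,dx$: its poles occur only along $\{\rho-k:\ b_f(\rho)=0,\ k\in\mathbf{Z}_{\ge 0}\}$, and homogeneity of $f$ confines the roots $\rho$ of $b_f$ to a bounded interval, so this pole set lies below a negative number. Using this, and translating $\gamma$ by an integer if necessary, one may assume $\gamma$ is not a root of $b_f$ and that $f^s$ is regular at $s=\gamma$ and at $s=\gamma+1$. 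Differentiating $Df^{s+1}=b_D(s)f^s$ in $s$ and specializing at $s=\gamma$, up to the order of vanishing of $b_D$ there, then produces a constant-coefficient operator --- nonzero because primitivity of $D$ forces $D|_{s=\gamma}\ne 0$ --- which is homogeneous of order $d$ and annihilates $f^{\gamma+1}$ (together with enough of its logarithmic multiples to witness the multiplicity of $\gamma$ in $c$).

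This is impossible. Writing $P(\xi)$ for the symbol of $D|_{s=\gamma}$, the term of $D|_{s=\gamma}(f^{\gamma+1})$ carrying the lowest power of $f$ equals, up to a nonzero constant, $P(\nabla f)$; hence $P$ vanishes on the image of the gradient map $\nabla f\colon\bC^n\to\bC^n$, which is dominant because $f$ involves every variable --- forcing $P\equiv 0$, i.e., $D|_{s=\gamma}=0$, a contradiction. Therefore $c$ is a scalar and $b_D=b_f$. I expect the main obstacle to be the middle paragraph: turning $Df^{s+1}=b_D(s)f^s$ into the precise statement that a zero of $b_D$ not already a zero of $b_f$ would force a cancellation against a pole of $f^s$ that is not there, and choosing $\gamma$ so that the specialization is legitimate. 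This is exactly the zeta-function bookkeeping of \cite{igusa}; since $f$ is homogeneous it carries over without change, which is why the proposition follows directly from that argument.
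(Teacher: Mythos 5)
Your route is genuinely different from the paper's, which proves Proposition \ref{prop-minimal} analytically following Igusa: for any $b$ satisfying (\ref{sato-bernstein}) the quotient $Z_f(s)/\gamma_b(s)$ of the archimedean zeta function by the associated Gamma factors is entire, while integrating by parts with $D$ against the Gaussian gives the explicit formula $Z_f(s)=(2\pi)^{-ds}\gamma_{b_D}(s)\prod_\mu\Gamma(-\mu)^{-1}$, which is nowhere vanishing after division by $\gamma_{b_D}$; minimality follows by comparing the two. You invoke this machinery in passing but never actually use it, and the purely formal argument you substitute has two genuine gaps. First, your reduction rests on the claim that no nonzero constant-coefficient operator annihilates $f^{s+1}$ because ``$f$ involves every variable.'' That is false: $\partial_1-\partial_2$ kills every power of $x_1+x_2$, and for the very $f$ of this paper the operator $\sum_i\partial_{m^i_i}$ kills $f^{s+1}$, since $f(M+tI,v)=f(M,v)$. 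The same invariance shows the image of $\nabla f$ lies in the hyperplane where the trace coordinate of the $M$-block vanishes, so your third-paragraph claim that $\nabla f$ is dominant (again justified only by ``$f$ involves every variable''; compare $f=(x_1+x_2)^2$) also fails for this $f$.

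Second, even where $\nabla f$ is dominant the final contradiction does not follow. From $D|_{s=\gamma}f^{\gamma+1}=0$, clearing the factor $f^{\gamma+1-d}$ yields a polynomial identity, and reducing it mod $f$ gives only that $f$ divides $(\gamma+1)\gamma\cdots(\gamma-d+2)\,P(\nabla f)$, i.e.\ $P$ vanishes on $\nabla f(\{f=0\})$ -- not on the whole image of $\nabla f$ -- and the falling-factorial constant may itself vanish. Moreover ``translating $\gamma$ by an integer'' is not available: $\gamma$ is a fixed root of $b_D/b_f$, and a zero of $b_D$ at $\gamma$ gives no zero at $\gamma+k$; no bookkeeping is supplied to move it, and a nonzero constant-coefficient operator of order $d$ genuinely can kill a power of $f$ (e.g.\ $\partial^2$ kills $(x^2)^{1/2}$). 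These are not repairable technicalities: with only the stated hypotheses the desired conclusion is not a formal consequence at all -- e.g.\ $D=(s+6)\partial_1-\partial_2$ is primitive and gives $Df^{s+1}=(s+1)(s+5)f^s$ for $f=x_1+x_2$, and likewise $p(s)S+Q\sum_i\partial_{m^i_i}$ produces proper multiples of $\hat b$ for the paper's $f$ -- so any correct proof must exploit the specific relation between $D$ and $f$ (here $D=S=f(\partial)$), which is exactly what Igusa's integration-by-parts step, reproduced in the paper's proof, does.
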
   
\begin{proof}
The proof is identical to the proof of \cite[Theorem 6.3.1]{igusa}.  We give a sketch of it for the reader's convenience. 

For any polynomial $p(s) = \prod_{\mu} (s - \mu)$, we define
\[
\gamma_p(s) = \prod_{\mu} \Gamma(s - \mu).
\]

Let $b(s)$ be a polynomial satisfying (\ref{sato-bernstein}) with respect to $f \in \bC[X]$.  Define the zeta function corresponding to $f$ as
\[
Z_f(s) = \int_X |f(x)|^s e^{-2\pi|x|^2} dx.
\]
By \cite[Theorem 5.3.2]{igusa}, $Z_f(s) /\gamma_b(s)$ is holomorphic on $\bC$.  Each root of $b$ corresponds to infinitely many zeros of the function $Z_f(s) /\gamma_b(s)$.  Thus if $Z_f(s) /\gamma_b(s)$ is nowhere-vanishing, $b$ must be minimal.  We will now show this for $b_D$.

The integration by parts argument from \cite[Theorem 6.3.1]{igusa} applies using $D$ instead of $f(\partial)$ to show that
\[
Z_f(s) = (2 \pi)^{-d s} \gamma_{b_D}(s) \prod_{\mu}\frac{1}{\Gamma(-\mu)}
\]
for $\mathrm{Re}(s) > 0 $.  Thus $Z_f(s) / \gamma_{b_D}(s)$ is a nowhere-vanishing holomorphic function.  
\end{proof}

\begin{remark}
Let $D \in \bC[s,\partial_{x_1},\ldots,\partial_{x_n}]$ be an operator satisfying (\ref{sato-bernstein}) with respect to the Bernstein polynomial $b$. Let $\tilde D$ be the degree $-m$ part of $D$.  Then taking the degree $m s $ part of (\ref{sato-bernstein}) we get $\tilde D f^{s+1} = b(s) f^s$.   Since $G$ acts locally finitely on $D_X$, we can write $\tilde D = \sum_{i}\tilde D_i$ where $g \cdot \tilde D_i = \det(g)^i \tilde D_i$.  Then since $g f = \det(g) f$, this means $\tilde D_i f^{s+1} = 0$ unless $ i = -1$.  Thus we have $\tilde D_{-1} f^{s+1} = b(s) f^s$.  Then by Proposition $\ref{prop-f-structure}$ since $\deg(f) = \deg(S)$ we have 
\[
\tilde D_{-1} = c S
\] 
where $c \in \bC$. So we have, in fact, shown that any Bernstein operator independent of $x \in X$ realizing $b(s)$ is our differential operator $S$ up to a constant factor.
\end{remark}


\section{Appendix: Proof of Propositions \ref{prop-s-1d} and \ref{prop-ep-maps-Gtog} }
\label{Section-proof-prop}

In this section, we will prove Propositions \ref{prop-s-1d} and \ref{prop-ep-maps-Gtog}.

Given $\bj \in \mathbf{Z}^{R^+}$, an index on $R^{+}$, we define a partial order by $\bj \leq 0$ if $\bj_\alpha \leq 0$ for all $\alpha$.  We represent the basis of $\mathbf{Z}^{R^+}$ by $e_\alpha$ where $\alpha \in R^+$.  Also denote $\ba^\bj = \prod_{\alpha \in R^+} \alpha^{\bj_{\alpha}}.$  If $p \in \bC[k] \otimes \bC[\gerh],$ then as above, we denote the corresponding differential operator in $\bC[k] \otimes D_{\gerh}$ as $\partial_{p}$ or $\partial( p )$.

Let $h_1,\ldots,h_n$ be an orthonormal basis of $\gerh^*$. As a shorthand we write $\partial_i = \partial(h_i)$.  So $\Delta_{\gerh} = \sum_{i=1}^n \partial_i^2$.

The following lemma is a straightforward calculation.
\begin{lem}\label{lem-wa-calc-1}
Let $\sum \br = \sum_{i=1}^n \br_i$ and $p^{\left( \br \right)} = \prod_{i=1}^n \partial_{t_i}^{\br_i} p(t)$ and
\[
\AV(\br) = \frac{\left(\sum \br \right)!} {\prod_{i=1}^n \left( \br_i! \right) } (-1)^{\sum \br}.
\]
Then we have
\begin{equation}
\partial( p) (\alpha^{-1} e^{t\ld}) = \sum_{\br = 0}^{\binf} \AV(\br) \alpha^{-1-\sum \br} \prod_{i=1}^n \left( \partial_i \alpha \right)^{r_i} p^{\left( \br \right)}.
\end{equation}
\qed
\end{lem}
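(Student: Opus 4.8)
The plan is to derive the identity from two elementary ingredients: conjugating the constant-coefficient operator $\partial(p)$ past the exponential $e^{t\ld}$, and a direct computation of the iterated partial derivatives of $\alpha^{-1}$. Write $t=(t_1,\dots,t_n)$ for the linear coordinates on $\gerh$ for which $\partial_i=\partial_{t_i}$, so that $\partial(p)$ is obtained from $p(t)$ by substituting $\partial_i$ for $t_i$; recall that $\alpha\in\gerh^*$ is linear, so every $\partial_i\alpha$ is a scalar, and write $\ld_i=(\ld,h_i)$ for the coordinates of $\ld$.

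The first step is the identity
\[
\partial(p)\,(e^{t\ld}\,w) = e^{t\ld}\; p(\partial_1+\ld_1,\dots,\partial_n+\ld_n)\,w ,
\]
valid for any function $w$: it reduces to $\partial_i(e^{t\ld}w)=e^{t\ld}(\partial_i+\ld_i)w$ and then follows by composition. Since $p$ is a polynomial, its Taylor expansion about $\ld$ is exact, so
\[
p(\partial_1+\ld_1,\dots,\partial_n+\ld_n) \;=\; \sum_{\br=0}^{\binf}\frac{p^{(\br)}(\ld)}{\prod_{i=1}^n\br_i!}\,\prod_{i=1}^n\partial_i^{\br_i},
\]
a finite sum, since $p^{(\br)}=0$ once $\sum\br>\deg p$. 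Taking $w=\alpha^{-1}$ gives
\[
\partial(p)\,(\alpha^{-1}e^{t\ld}) \;=\; \sum_{\br=0}^{\binf}\frac{p^{(\br)}(\ld)\,e^{t\ld}}{\prod_{i=1}^n\br_i!}\,\Big(\prod_{i=1}^n\partial_i^{\br_i}\Big)\alpha^{-1}.
\]

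The second step evaluates $\big(\prod_i\partial_i^{\br_i}\big)\alpha^{-1}$. Because $\partial_i\alpha$ is constant and $\partial_i(\alpha^{-m})=-m(\partial_i\alpha)\alpha^{-m-1}$, an induction on $\sum\br$ gives
\[
\Big(\prod_{i=1}^n\partial_i^{\br_i}\Big)\alpha^{-1} \;=\; (-1)^{\sum\br}\,(\sum\br)!\;\Big(\prod_{i=1}^n(\partial_i\alpha)^{\br_i}\Big)\,\alpha^{-1-\sum\br}.
\]
Dividing by $\prod_i\br_i!$ turns the numerical prefactor into $(-1)^{\sum\br}(\sum\br)!/\prod_i\br_i!=\AV(\br)$, and substituting this into the previous display yields
\[
\partial(p)\,(\alpha^{-1}e^{t\ld}) \;=\; \sum_{\br=0}^{\binf}\AV(\br)\,\alpha^{-1-\sum\br}\,\prod_{i=1}^n(\partial_i\alpha)^{\br_i}\;p^{(\br)},
\]
which is the asserted formula, with the factor $p^{(\br)}(\ld)\,e^{t\ld}=\partial(p^{(\br)})\,e^{t\ld}$ written $p^{(\br)}$ in the statement.

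There is no genuine obstacle here — this is precisely why the result is flagged as a straightforward calculation. The only points requiring a little attention are the multi-index bookkeeping, namely checking that the coefficient produced is exactly the signed multinomial $(-1)^{\sum\br}(\sum\br)!/\prod_i\br_i!$, and keeping $e^{t\ld}$ and $\alpha^{-1}$ in their correct roles, so that the exponential supplies the factor $p^{(\br)}$ while $\alpha^{-1}$ supplies both $\AV(\br)$ and the power $\alpha^{-1-\sum\br}$.
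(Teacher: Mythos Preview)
Your argument is correct and is exactly the direct computation the paper has in mind; the paper itself gives no proof beyond declaring the lemma a ``straightforward calculation.'' One small notational slip: in the paper's conventions the differentiation is with respect to the $\gerh$-variable (on which $\alpha$ depends) while $t$ is the dual parameter, so the conjugation step yields $p(\partial+t)$ and hence $p^{(\br)}(t)$ rather than $p^{(\br)}(\lambda)$ --- this is just a relabeling $t\leftrightarrow\lambda$ and does not affect the mathematics.
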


The key lemma we need for proving Proposition \ref{prop-s-1d} reads as follows.

\begin{lem}\label{lem-pn-determines}
The map
\begin{align*}
  p_\bN :       \mathbb{S}_{\gerh}(-1,k)  &\longrightarrow \bC[k] \otimes \bC[\gerh] \\
   \sum_{\bj \leq \bN} \ba^\bj \partial(p_\bj)&\longmapsto p_\bN
\end{align*}
is injective.
\end{lem}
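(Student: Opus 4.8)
The plan is to show that if $D = \sum_{\bj \leq \bN} \ba^\bj \partial(p_\bj) \in \mathbb{S}_{\gerh}(-1,k)$ has $p_\bN = 0$, then $D = 0$ — in other words, the ``top'' coefficient $p_\bN$, corresponding to the maximal index $\bN$ appearing in the expansion in powers of the $\alpha^{-1}$, rigidly determines the entire operator through the shift relation $D L_{\gerh}(k) = L_{\gerh}(k-1) D$. The strategy is to extract information from the shift equation grade by grade, using the $\mathbf{Z}$-grading on $D_{\gerh^{\reg}}$ in which $|\alpha^{-1}| = -1$. First I would write $D = \sum_{\bj} \ba^\bj \partial(p_\bj)$ and, since $D$ is $W$-invariant and $L_{\gerh}(k) = \Delta_\gerh + 2k P^+$, expand both sides of \eqref{eqn-shift-op} and compare homogeneous components with respect to this grading. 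The operator $L_{\gerh}(k)$ is a sum of a degree $-2$ piece $\Delta_\gerh$ and a degree $-2$ piece $2kP^+ = 2k\sum_\alpha \alpha^{-1}\partial_{X_\alpha}$ — actually both are lowest-homogeneous-degree $-2$, which is exactly the structure exploited in \cite{Op5}.

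The key computational input is Lemma \ref{lem-wa-calc-1}, which tells us precisely how a differential operator $\partial(p)$ acts on expressions of the form $\alpha^{-1} e^{t\lambda}$. The idea I would pursue: apply $D$ and $L_{\gerh}(k-1) D$ to a suitable family of test functions — for instance $\delta^{-1} e^{t\lambda}$ or more refined elements $\ba^\bj e^{t\lambda}$ as $\lambda$ ranges over $\gerh^*$ — and use the shift identity to obtain recursion relations expressing each $p_\bj$ (for $\bj < \bN$) in terms of the $p_{\bj'}$ with $\bj' > \bj$, ultimately in terms of $p_\bN$. Running the recursion from the top downward, if $p_\bN = 0$ then every $p_\bj = 0$, hence $D = 0$. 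Concretely, the term of extreme grade on the left side of \eqref{eqn-shift-op} pairs $\partial(p_\bN)$ against the $P^+$ part of $L_{\gerh}$, and matching it against the corresponding extreme-grade term on the right forces a constraint; the next grade down expresses $p_{\bN - e_\alpha}$ via $p_\bN$ and the $\alpha$-derivatives appearing in $P^+$, and so on.

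The main obstacle I anticipate is bookkeeping: controlling the combinatorics of how the multiple terms $\alpha^{-1}\partial_{X_\alpha}$ in $P^+$ (one for each positive root) interact with the multi-index expansion of $D$, and verifying that at each grade the relevant linear map — the one sending the unknown coefficient $p_\bj$ to its contribution in the shift equation — is injective, so that the recursion genuinely determines $p_\bj$ rather than merely constraining it. Since $L_{\gerh}(k)$ has leading symbol $\Delta_\gerh$, the top-order (in the usual order filtration) part of the relevant map is essentially $\mathrm{ad}(\Delta_\gerh)$ composed with multiplication structure, and one must check this is injective on the space of $W$-invariant differential operators of the relevant form; I would handle this by evaluating on the exponentials $e^{t\lambda}$, where $\Delta_\gerh$ acts by the scalar $(\lambda,\lambda)$ and the injectivity becomes transparent for generic $\lambda$. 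A secondary point to verify is polynomiality in $k$: one must ensure that solving the recursion does not introduce denominators in $k$, which follows because the leading coefficients in the recursion are of the form $(\lambda,\lambda)$ (independent of $k$) rather than something that vanishes for special $k$.
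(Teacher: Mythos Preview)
Your proposal is essentially the paper's proof: apply the shift relation to the exponentials $e^{t\lambda}$, expand both sides using Lemma~\ref{lem-wa-calc-1}, and read off a recursion in the coefficients $p_\bj$ by extracting the coefficient of $\ba^{\bM}e^{t\lambda}$ for each multi-index $\bM$, then run the recursion downward from $\bN$.

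Two small course-corrections. First, the paper applies the shift equation directly to $e^{t\lambda}$ (no need for $\delta^{-1}e^{t\lambda}$ or $\ba^\bj e^{t\lambda}$); the powers $\ba^\bj$ are already supplied by the expansion of $D$ and of $P^+$. Second, the mechanism that makes the recursion solvable is not an $\mathrm{ad}(\Delta_\gerh)$ argument via the order filtration: in the $\ba^\bM$-coefficient the terms $p_{\bM+e_\alpha}$ appear \emph{undifferentiated in $t$}, multiplied by $t_\alpha$ times an explicit nonzero integer (independent of $k$), while all $p_{\bj}$ with $\bj - \bM$ of total degree $\geq 2$ appear either differentiated or with purely numerical coefficients. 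This is what allows you to isolate each $p_{\bM+e_\alpha}$ in terms of strictly higher $p_{\bj'}$. Your worry about denominators in $k$ is therefore moot for the injectivity statement (and, as it happens, the relevant coefficient is $k$-free anyway).
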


\begin{proof}
First note that if $D \in \mathbb{S}_{\gerh}(r,k)$ and $S \in \mathbb{S}_{\gerh}(0,k)$, then commuting $S$ and $L_{\gerh}(k)$,
\begin{align*}
(D S) L_{\gerh}(k) &= L_{\gerh}(k+r) (D S).
\end{align*}
So $\mathbb{S}_{\gerh}(r,k)$ is an $\mathbb{S}_{\gerh}(0,k)$-module.  

We will solve for an arbitrary element $D$ of the set $\mathbb{S}_{\gerh}(r,k).$ Since $D \in \bC[k] \otimes D^W_{\gerh^{\reg}}$, we can write
\[
D = \sum_{\bj \leq \bN} \ba^\bj \partial(p_\bj) .
\] 
Let $ e^{t \ld} = e^{t_1 \lambda_1 + \ldots + t_n \lambda_n}$.  If $\alpha = \sum_i c_i \lambda_i,$ then denote $t_\alpha = \sum_i c_i t_i$.
Then applying (\ref{eqn-shift-op}) to $e^{t \ld}$, we solve for $D$ (i.e. solve for the $p_i$) in 
\[
D L_{\gerh}(k) e^{t \ld} = L_{\gerh}(k+r)D e^{t \ld}.
\]
Using Lemma \ref{lem-wa-calc-1}, we expand this into
\begin{align*}
 0 = D L_{\gerh}(k) e^{t \ld} & - L_{\gerh}(k+r) D e^{t \ld} = \\
e^{t \ld} \sum_{\bj \leq \bN} \ba^\bj  &\left[  2k \sum_{\alpha \in R^+} t_\alpha \sum_{\br = 0}^{\binf} \AV(\br) \alpha^{-1-\sum \br} \prod_i (\partial_i\ba)^{\br_i} p^{(\br)}(t)  \right. \\
    &  - p_\bj(t) 
    \left \lbrace
    	  \sum_{\alpha \in R^+} (\bj_\alpha^2 - \bj_\alpha) \alpha^{-2} \left(\sum_{l=1}^n (\partial_l \alpha)^2 \right) + 2\bj_\alpha \alpha^{-1} t_\alpha \right.  \\
	  &\ \ \ \ \ \ \ \ \ \ \ + 2 \bj_\alpha \sum_{\alpha \not = \beta} \bj_\beta \alpha^{-1} \beta^{-1} \left( \sum_{l=1}^n (\partial_l \alpha)(\partial_l \beta) \right) \\
	  &\ \ \ \ \ \ \ \ \ \ \left.\left.+ 2 (k+r) \left( \sum_{\alpha \in R^+} \alpha^{-2} \bj_\alpha 2 + \alpha^{-1} t_\alpha + 2 \sum_{\alpha \not = \beta} \beta^{-1} \alpha^{-1} \bj_\alpha (\partial_\beta \alpha)\right)
    \right\rbrace 
     \right].
\end{align*}
Hence extracting the coefficient of $\ba^{\bN - e_\beta} e^{t \ld} $, we get 
\[
0 = p_{\bN} t_\beta 2( k - \bN_\beta - (k+r))
\]
and so assuming that $p_\bN \not = 0$ we get that $\bN_\beta = -r$ for all $\beta$.  That is, $\bN = (-r,\ldots,-r)$.

Denote $\nabla \alpha = (\partial_1 \alpha, \ldots, \partial_m \alpha)$. In general, if we look at the coefficient of $\ba^\bM$ we get that the following expression is $0$:
\begin{align}\label{eqn-recursion}
&2k \sum_{\alpha \in R^+}  \sum_{\br = 0}^\binf p_{\bM+e_\alpha(1+\sum \br)}^{(\br)} t_\alpha  \AV(\br) \prod_{i=1}^n (\partial_i \alpha)^{\br_i}  \\ \nonumber
&- \sum_{\alpha \in R^+} p_{\bM+2 e_\alpha} (\bM_\alpha +2)(\bM_\alpha +1) \nabla \alpha \mathbf{\cdot} \nabla \alpha \\ \nonumber
&- \sum_{\alpha \in R^+} p_{\bM+ e_\alpha} 2 (\bM_\alpha+1) t_\alpha\\ \nonumber
&- \sum_{\alpha \not = \beta} p_{\bM+e_\alpha + e_\beta}  2 (\bM_\alpha+1)(\bM_\beta+1) \nabla \alpha \mathbf{\cdot} \nabla \beta\\ \nonumber
&- \sum_{\alpha \in R^+} p_{\bM+2 e_\alpha} 4(k+r) (\bM_\alpha+2)\\ \nonumber
&- \sum_{\alpha \in R^+} p_{\bM+ e_\alpha} 2 (k+r) t_\alpha\\ \nonumber
&- \sum_{\alpha \in R^+} p_{\bM+ e_\alpha + e_\beta} 4 (k+r) (\bM_\alpha+1) (\alpha, \beta). \nonumber
\end{align}
This equation involves only $p_{\bj}$ with $\bj \geq \bM$.  Define $\deg(\bj) = \sum_{\alpha \in R^+} \bj_\alpha$ and $|\bj| = |\deg(\bj)|$.  Then note that even though some of the $p_\bj$ are differentiated, none of the ones with $|\bM - \bj| = 1$ are.  We can use (\ref{eqn-recursion}) to solve for one of the $p_\bj$ in terms of $p_\bj$ of greater degree.  If we move through the $\ba^\bM$ with decreasing degree, we can thus show that all $p_\bj$ are ultimately determined by $p_\bN$ alone.  It is lengthy although not difficult to give a precise algorithm for how one should progress downward through the indices.

  Thus we know that $p_\bN$ determines $D$. 
  
\end{proof}

\begin{proof}[Proof of Proposition \ref{prop-ep-maps-Gtog}] 
We first note that since $L_{\gerh}(k)$ is homogeneous of degree $-2$, the generator of $\mathbb{S}_{\gerh}(r,k)$ must be homogeneous.  If $D \in \mathbb{S}_{\gerh}(r,k)$ is not homogeneous, then $D L_{\gerh}(k) = L_{\gerh}(k+r) D$ implies $\epsilon(D L_{\gerh}(k)) = \epsilon(L_{\gerh}(k+r) D)$ and since $L_{\gerh}(k)$ is already homogeneous, $\epsilon(D) L_{\gerh}(k) = L_{\gerh}(k+r) \epsilon(D)$.  So $\epsilon(D) \in \mathbb{S}_{\gerh}(r,k)$ but is not a multiple of $D$.  

Now since the degree of differential operators is additive, $g(-1,k)$ must have the minimal degree.  We know from \cite[Corollary 3.12]{OpIII} that $\deg(G(-1,k)) = \frac{n(n+1)}{2} ,$ and it has lowest homogeneous degree $0$ \cite[Theorem 4.4]{OpIII}.  Thus, since $\bN = (1,\ldots,1)$, we can only have $p_\bj \not = 0$ for $\bj$ where $\deg(p_\bj) = \sum \bj \geq 0.$  

Now we know $D \in D_\gerh^W$, so $w\cdot p_\bj(t) = p_{w.\bj}(t)$ where $W$ acts on the indices $\bj$ by permutation.  Thus $p_\bN$ is $W$-invariant.

Let $\alpha \in R^+$. By (\ref{eqn-recursion}) with $\bM = (1,\ldots,1) - 2 e_\alpha$, we see that 
\[
0=2k p_{\bN-e_\alpha} t_\alpha + \sum_{i=1}^n 2k (\partial_i p_{\bN}) t_\alpha \AV(e_i) (\partial_i \alpha) - 4(1-k) p_{\bN} - t_\alpha p_{\bN-e_\alpha} 2 (k-1) 
\]
since $\bM_\alpha = -1$.  Then we see that $t_\alpha$ divides $p_{\bN}$ unless
\[
0=2k p_{\bN-e_\alpha} t_\alpha + \sum_{i=1}^n 2k (\partial_i p_{\bN}) t_\alpha \AV(e_i) (\partial_i \alpha) - t_\alpha p_{\bN-e_\alpha} 2 (k-1). 
\]
This requires $p_{\bN}$ to be $0$, which makes $g(-1,k)=0$, contradicting the fact that it is a generator.

Thus for all $\alpha$, $t_\alpha$ divides $p_\bN$.  Then the positive roots give $\frac{n(n+1)}{2}$ independent divisors of $p_\bN$ and so $\deg(g(-1,k)) \geq n(n+1)/2$.  Thus the $p_\bN$ term of $g(-1,k)$ is a scalar multiple of the $p_\bN$ term of $\epsilon(G(-1,k))$.  So the operators are scalar multiples of each other and thus $\epsilon(G(-1,k))$ is a generator.  

\end{proof}

\begin{proof}[Proof of Proposition \ref{prop-s-1d}]
 By Lemma \ref{lem-pn-determines}, $D$ is determined by $p_\bN(D)$.  Given $D \in \mathbb{S}_{\gerh}(-1,k)$ and $S \in \mathbb{S}_{\gerh}(0,k)$, then the lead coefficients are multiplicative,
\[
p_\bN(SD) = p_\bN(S) p_\bN(D).
\]
Hence Lemma \ref{lem-pn-determines} reduces us to showing that $p_\bN(\mathbb{S}_{\gerh}(-1,k))$ is a rank-one $p_\bN(\mathbb{S}_{\gerh}(0,k))$-module.

By \cite[Theorem 1.7]{Heck} we know that 
\[C[t_1,\ldots,t_n]^W \subset p_\bN(\mathbb{S}_{\gerh}(0,k)).
\]
  By Proposition \ref{prop-ep-maps-Gtog}, we know that $\prod_{\alpha \in R^+} t_\alpha$ divides $p_\bN(D)$ for all $ D \in \mathbb{S}_{\gerh}(-1,k)$.  Now since $D$ is $W$-invariant and $\bN$ is fixed by $W$, we know that the lead term
  \[
  \left( \prod_{\alpha \in R^+} \alpha \right)\partial(p_\bN)
  \]
  is $W$-invariant.  Since $w$ acts on $\prod_{\alpha \in R^+}$ by $\sgn(w)$, this means $w$ acts on $p_\bN$ by $\sgn(w)$ and is thus equal to 
  \[
  q(t) \left( \prod_{\alpha \in R^+} \alpha \right)
  \] 
  where $q(t) \in C[t_1,\ldots,t_n]^W \subset p_\bN(\mathbb{S}_{\gerh}(0,k)).$  Since $p_\bN(\epsilon(G(-1,k))) = c \left(  \prod_{\alpha \in R^+} \alpha \right)$, we know $\left(  \prod_{\alpha \in R^+} \alpha \right) \in p_\bN(\mathbb{S}_{\gerh}(-1,k))$.  Thus 
  \[
  p_\bN(\mathbb{S}_{\gerh}(-1,k)) = C[t_1,\ldots,t_n]^W \left(  \prod_{\alpha \in R^+} \alpha \right)     
  \]
  is clearly a rank-one $p_\bN(\mathbb{S}_{\gerh}(0,k))$-module.
\end{proof}

\hfill 
\bibliographystyle{amsplain}
\bibliography{refs}

\end{document}